\theoremstyle{plain}
\newtheorem{lemma}{Lemma}[section]
\newtheorem{proposition}[lemma]{Proposition}
\newtheorem{theorem}[lemma]{Theorem}
\newtheorem{corollary}[lemma]{Corollary}
\theoremstyle{definition}
\newtheorem{example}[lemma]{Example}
\newtheorem{condition}[lemma]{Condition}
\theoremstyle{definition}
\theoremstyle{remark}
\newtheorem{remark}[lemma]{Remark}
\newcommand{\devnull}[1]{}
\numberwithin{equation}{section}
\title{On non-stationary solutions to MSDDEs: representations and the cointegration space}
\author{Mikkel Slot Nielsen}
\date{February 25, 2019}
\date{\small Department of Mathematics \\ Aarhus University\\
 mikkel@math.au.dk}
\begin{document}
\maketitle

\begin{abstract}
In this paper we study solutions to multivariate stochastic delay differential equations (MSDDEs) which have stationary increments, and we show that this modeling framework is in many ways similar to the discrete-time cointegrated VAR model. In particular, we observe that an MSDDE can always be written in an error correction form and, under suitable conditions, we argue that a process with stationary increments is a solution to the MSDDE if and only if it admits a certain Granger type representation. As a direct implication of these results we obtain a complete characterization of the set of cointegration vectors (the cointegration space). Finally, we exploit the relation between MSDDEs and invertible multivariate CARMA equations to define cointegrated MCARMA processes, and we discuss how this definition is related to earlier literature.
\\ \\
\footnotesize \textit{MSC 2010 subject classifications: 60G10, 60G12, 60H05, 60H10, 91G70} 
\\ \  \\
\textit{Keywords: Cointegration, error correction form, Granger representation theorem, multivariate CARMA processes, multivariate SDDEs, non-stationary processes} 
\end{abstract}


\section{Introduction and main results}\label{intro}
Cointegration refers to the phenomenon that some linear combinations of non-stationary time series are stationary. This concept goes at least back to \citet{engle1987co} who used the notion of cointegration to formalize the idea of a long run equilibrium between two or more non-stationary time series. Several models have been shown to be able to embed this idea, and one of the most popular among them is the VAR model:
\begin{equation}\label{cointegratedVAR}
X_t = \Gamma_1 X_{t-1}+ \Gamma_2 X_{t-2} + \cdots + \Gamma_p X_{t-p} + \varepsilon_t,\quad t \in \mathbb{Z}.
\end{equation}
Here $(\varepsilon_t)_{t\in \mathbb{Z}}$ is an $n$-dimensional, say, i.i.d.\ sequence with $\mathbb{E}\varepsilon_0 = 0$ and $\mathbb{E}[\varepsilon_0 \varepsilon_0^T]$ invertible, and $\Gamma_1,\dots, \Gamma_p \in \mathbb{R}^{n\times n}$ are $n\times n$ matrices. If one is searching for a solution $(X_t)_{t\in\mathbb{Z}}$ which is only stationary in its differences, $\Delta X_t := X_t-X_{t-1}$, one often rephrases \eqref{cointegratedVAR} in error correction form
\begin{equation}\label{ecf_var}
\Delta X_t = \Pi_0 X_{t-1} + \sum_{j=1}^{p-1} \Pi_j \Delta X_{t-j} + \varepsilon_t,\quad t \in \mathbb{Z},
\end{equation}
where $\Pi_0 = -I_n + \sum_{j=1}^p \Gamma_j$ and $\Pi_j = -\sum_{k=j+1}^p \Gamma_k$. (Here $I_n$ denotes the $n\times n$ identity matrix.) Properties of solutions to \eqref{cointegratedVAR} concerning existence, uniqueness and stationarity are determined by the characteristic polynomial $\Gamma (z) := I_n-\sum_{j=1}^p \Gamma_j z^j$. Let $r$ be the rank of $\Pi_0 = -\Gamma (1)$ and, if $r<n$, let $\alpha^\perp,\beta^\perp\in \mathbb{R}^{n\times (n-r)}$ be matrices of rank $n-r$ satisfying $\Pi_0^T\alpha^\perp = \Pi_0\beta^\perp = 0$. Standard existence and uniqueness results for VAR models and the Granger representation theorem yield the following:
\begin{theorem}\label{discreteVAR} Suppose that $\det \Gamma (z)= 0$ implies $\vert z \vert > 1$ or $z= 1$. Moreover, suppose either that $r=n$, or $r<n$ and $(\alpha^\perp)^T\bigl(I_n-\sum_{j=1}^{p-1}\Pi_j \bigr)\beta^\perp$ is invertible. Then a process $(X_t)_{t\in \mathbb{Z}}$ with $\mathbb{E}\lVert X_t\rVert^2 <\infty$ and stationary differences is a solution to \eqref{cointegratedVAR} if and only if
\begin{equation}\label{Granger_discrete}
X_t = \xi + C_0 \sum_{j=1}^t \varepsilon_j + \sum_{j=-\infty}^t C (t-j) \varepsilon_j,\quad t \in \mathbb{Z},
\end{equation}
where:
\begin{enumerate}[(i)]
	\item $\xi$ is a random vector satisfying $\mathbb{E}\lVert \xi \rVert^2<\infty$ and $\Pi_0\xi =0$.
	\item $C_0 = \begin{cases} 0 & \text{if } r=n\\
	\beta^\perp \bigl[(\alpha^\perp)^T\bigl(I_n-{\textstyle\sum_{j=1}^{p-1}\Pi_j} \bigr)\beta^\perp\bigr]^{-1} (\alpha^\perp)^T & \text{if } r<n
	\end{cases}$.
	\item $C(j)$ is the $j$-th coefficient in the Taylor expansion of $z\mapsto \Gamma (z)^{-1}-(1-z)^{-1}C_0$ at $z=1$ for $j\geq 0$.
\end{enumerate}
\end{theorem}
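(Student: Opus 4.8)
\emph{Proof strategy.} The plan is to reduce the whole statement to the local behaviour of the matrix-valued function $\Gamma(z)^{-1}$ on the closed unit disk and then to read off \eqref{Granger_discrete} through the lag operator $L$. Writing \eqref{cointegratedVAR} as $\Gamma(L)X_t=\varepsilon_t$, the root condition ($\det\Gamma(z)=0\Rightarrow|z|>1$ or $z=1$) guarantees that $\Gamma(z)^{-1}$ is holomorphic on a neighbourhood of $\{|z|\le 1\}$ except possibly at $z=1$. I would first record the elementary factorisation
\begin{equation*}
\Gamma(z) = -\Pi_0 + (1-z)\Psi(z),\qquad \Psi(z) = \sum_{j=1}^p \Gamma_j\bigl(1+z+\cdots+z^{j-1}\bigr),
\end{equation*}
which gives $\Gamma(1)=-\Pi_0$ and $\Gamma'(1)=-\Psi(1)$, together with the bookkeeping identity $I_n-\sum_{j=1}^{p-1}\Pi_j=\Psi(1)-\Pi_0$.

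\emph{Main step: the pole at $z=1$.} The heart of the argument, and the step I expect to be the main obstacle, is to show that under the rank condition $z=1$ is at worst a \emph{simple} pole of $\Gamma(z)^{-1}$ and to identify its principal part. Since $\Gamma(1)=-\Pi_0$ has rank $r$ with $\Gamma(1)\beta^\perp=0$ and $(\alpha^\perp)^T\Gamma(1)=0$, I would invoke the standard formula for inverting an analytic matrix function at a simple zero of its determinant: if $(\alpha^\perp)^T\Gamma'(1)\beta^\perp$ is invertible, then $z=1$ is a simple pole and
\begin{equation*}
\Gamma(z)^{-1} = \frac{C_0}{1-z} + \tilde C(z), \qquad C_0 = -\beta^\perp\bigl[(\alpha^\perp)^T\Gamma'(1)\beta^\perp\bigr]^{-1}(\alpha^\perp)^T,
\end{equation*}
with $\tilde C$ holomorphic on a disk of radius strictly larger than $1$. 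Using $\Gamma'(1)=-\Psi(1)$ and the identity above — the $\Pi_0$-part drops out because $(\alpha^\perp)^T\Pi_0=0$ and $\Pi_0\beta^\perp=0$ — a short computation turns this into the expression for $C_0$ in (ii), while $\tilde C(z)=\Gamma(z)^{-1}-(1-z)^{-1}C_0$ is exactly the function whose power-series coefficients provide the $C(j)$ of (iii). When $r=n$ the matrix $\Pi_0$ is invertible, $z=1$ is not a root, $C_0=0$, and this step collapses to ordinary stationary VAR inversion.

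\emph{Sufficiency.} Given a process of the form \eqref{Granger_discrete}, I would verify $\Gamma(L)X_t=\varepsilon_t$ directly. Since $\Pi_0\beta^\perp=0$ one has $\Pi_0 C_0=0$, and together with $\Pi_0\xi=0$ this ensures the non-stationary contributions are annihilated by the $-\Pi_0$ part of $\Gamma(L)$; applying the factorisation and $(1-L)\sum_{j=1}^t\varepsilon_j=\varepsilon_t$ then leaves a purely stationary expression. The algebraic identity $\Gamma(z)\bigl[(1-z)^{-1}C_0+\tilde C(z)\bigr]=I_n$, valid for $z\neq 1$ once the pole has been cancelled, is what makes the stationary part telescope to $\varepsilon_t$. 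Because $\Delta X_t=C_0\varepsilon_t+\Delta\sum_{j=-\infty}^t C(t-j)\varepsilon_j$ is a finite-variance stationary linear process, the candidate indeed solves \eqref{cointegratedVAR} with stationary differences.

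\emph{Necessity.} For the converse I would take any $L^2$ solution $(X_t)$ with stationary differences and subtract the particular solution $P_t:=C_0\sum_{j=1}^t\varepsilon_j+\sum_{j=-\infty}^t C(t-j)\varepsilon_j$ built above. The difference $Z_t:=X_t-P_t$ then satisfies the homogeneous recursion $\Gamma(L)Z_t=0$ and has stationary, finite-variance differences. It remains to argue that the only such solutions are constant in $t$ and lie in $\ker\Pi_0$: the root condition forces every deterministic mode attached to a root with $|z|>1$ to vanish, since such a mode cannot have stationary finite-variance differences, whereas the simple-pole ($I(1)$) conclusion of the main step ensures the $z=1$ mode supports only constants. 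This yields $Z_t=\xi$ with $\Pi_0\xi=-\Gamma(1)\xi=0$, which is precisely (i). This uniqueness (no higher-order integration) argument, again resting on the simple-pole conclusion, is the second delicate point.
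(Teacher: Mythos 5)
Your outline is essentially correct, but note that the paper does not prove Theorem~\ref{discreteVAR} at all: it is stated as a known consequence of ``standard existence and uniqueness results for VAR models and the Granger representation theorem'' and is cited from the literature (Engle--Granger, Hansen, Johansen), with no argument appearing in Section~\ref{proofs}. What you have written is the standard proof of that classical result, and it is in fact the exact discrete-time mirror of the argument the paper \emph{does} give for the continuous-time analogue, Theorem~\ref{Granger_intro}: your factorisation $\Gamma(z)=-\Pi_0+(1-z)\Psi(z)$ corresponds to the error correction form of Proposition~\ref{ecf_result_intro}; your ``main step'' (simple pole of $\Gamma(z)^{-1}$ at $z=1$ with residue $C_0$ read off from $(\alpha^\perp)^T\Gamma'(1)\beta^\perp$) is precisely the residue formula of Schumacher invoked in Proposition~\ref{propEqOfAssump} and Remark~\ref{C0_structure}; and your sufficiency computation parallels the verification in Theorem~\ref{existence}. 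Your algebra checks out: $\Gamma'(1)=-\Psi(1)$, $I_n-\sum_{j=1}^{p-1}\Pi_j=\Psi(1)-\Pi_0$, and the $\Pi_0$-terms drop out under $(\alpha^\perp)^T$ and $\beta^\perp$, recovering (ii). Two points deserve a remark. First, for the moving-average part to read $\sum_{j\ge 0}C(j)\varepsilon_{t-j}$ you are (correctly) taking the $C(j)$ to be the power-series coefficients of $\tilde C(z)=\Gamma(z)^{-1}-(1-z)^{-1}C_0$ at $z=0$, not at $z=1$ as the statement literally says; the continuous-time characterisation $\mathcal{L}[C](z)=h_\eta(z)^{-1}-z^{-1}C_0$ confirms this is the intended reading. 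Second, the necessity step is where the real work lies and is the only place your sketch is thin: asserting that every two-sided solution of the homogeneous recursion $\Gamma(L)Z_t=0$ with stationary, square-integrable differences is a constant in $\ker\Pi_0$ requires characterising the (pathwise) solution space of the two-sided difference equation and ruling out the unstable and polynomial modes; this is exactly the part of the continuous-time proof that the paper handles with the spectral-process argument in the proof of Theorem~\ref{existence}, and in the discrete case it is standard but should be spelled out rather than asserted.
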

(We use the conventions $\sum_{j=1}^0 = 0$ and $\sum_{j=1}^t = -\sum_{j=t+1}^0$ when $t<0$, and $\lVert \cdot \rVert$ denotes the Euclidean norm on $\mathbb{R}^n$.) The representation \eqref{Granger_discrete} has several immediate consequences: (i) any solution with stationary differences can be decomposed into an initial value, a unique stationary part and a unique non-stationary part, (ii) if $r=n$ the solution is stationary and unique, and (iii) if $r<n$ the process $(\gamma^T X_t)_{t\in \mathbb{Z}}$ is stationary if and only if $\gamma\in \mathbb{R}^n$ belongs to the row space of $\Pi_0=- \Gamma(1)$. In particular, cointegration is present in the VAR model when $\Pi_0$ has rank $r\in (0,n)$, and the cointegration space is spanned by the rows of $\Pi_0$.
There exists a massive literature on (cointegrated) VAR models, which have been applied in various fields. We refer to \cite{engle1987co,hansen2005granger,johansen1991estimation,johansen2009cointegration,runkle1987vector,sims1980macroeconomics} for further details.

In many ways, the multivariate stochastic delay differential equation (MSDDE)
\begin{equation}\label{MSDDE_intro}
X_t-X_s = \int_s^t\eta \ast X(u)\, du + Z_t-Z_s,\quad s<t,
\end{equation}
may be viewed as a continuous-time version of the (possibly infinite order) VAR equation \eqref{cointegratedVAR}. Here $Z_t = (Z^1_t,\dots, Z^n_t)^T$, $t\in \mathbb{R}$, is a L\'{e}vy process with $Z_0 = 0$ and $\mathbb{E}\lVert Z_1\rVert^2<\infty$, $\eta$ is an $n\times n$ matrix such that each entry $\eta_{ij}$ is a signed measure on $[0,\infty)$ satisfying
\begin{equation}\label{exponentialDecay_intro}
\int_{[0,\infty)} e^{\delta t}\, \vert \eta_{ij}\vert (dt)<\infty
\end{equation}
for some $\delta>0$, and $\ast$ denotes convolution. (For more on the notation used in this paper, see Section~\ref{prel}.) Moreover, $(X_t)_{t\in \mathbb{R}}$ will be required to satisfy $\mathbb{E}\lVert X_t\rVert^2<\infty$ and be given such that $(X_t,Z_t)_{t\in \mathbb{R}}$ has stationary increments. The precise meaning of \eqref{MSDDE_intro} is that
\begin{equation*}
X^i_t - X^i_s = \sum_{j=1}^n \int_s^t \int_{[0,\infty)} X_{u-v}^j\, \eta_{ij}(dv)\, du + Z_t^i - Z_s^i,\quad i=1,\dots, n,
\end{equation*} 
almost surely for any $s<t$. The model \eqref{MSDDE_intro} results in the multivariate Ornstein-Uhlenbeck process when choosing $\eta = A \delta_0$, $\delta_0$ being the Dirac measure at $0$ and $A\in \mathbb{R}^{n\times n}$, and stationary invertible multivariate CARMA (MCARMA) processes with a non-trivial moving average component can be represented as an MSDDE with infinite delay. Stationary solutions to equations of the type \eqref{MSDDE_intro}, MCARMA processes and their relations have been studied in \cite{basse2018multivariate,brockwell2014recent,GK,marquardt2007multivariate,mohammed1990lyapunov}. Similarly to $\Gamma$ for the VAR model, questions concerning solutions to \eqref{MSDDE_intro} are tied to the function
\begin{equation}\label{hFunction_intro}
h_\eta (z) = zI_n-\int_{[0,\infty)} e^{-zt}\, \eta (dt),\quad \text{Re}(z)>-\delta.
\end{equation}
In particular, it was shown that if $\det h_\eta (z)= 0$ implies $\text{Re}(z)<0$, then the unique stationary solution $(X_t)_{t\in \mathbb{R}}$ to \eqref{MSDDE_intro} with $\mathbb{E}\lVert X_t \rVert^2<\infty$ takes the form 
\begin{equation*}
X_t = \int_{-\infty}^t C(t-u)\, dZ_u,\quad t \in \mathbb{R},
\end{equation*}
where $C:[0,\infty)\to \mathbb{R}^{n\times n}$ is characterized by its Laplace transform:
\begin{equation*}
\int_0^\infty e^{-zt}C(t)\, dt = h_\eta (z)^{-1},\quad \text{Re}(z)\geq 0.
\end{equation*}
It follows that this result is an analogue to Theorem~\ref{discreteVAR} when $r=n$. To the best of our knowledge, there is no literature on solutions to \eqref{MSDDE_intro} which are non-stationary, and hence no counterpart to Theorem~\ref{discreteVAR} exists for the case $r<n$.

\textbf{The main result} of this paper is a complete analogue of Theorem~\ref{discreteVAR}. In the following we will set
\begin{equation}\label{PiQuantities}
\Pi_0 = \eta ([0,\infty))\quad \text{and}\quad \pi (t) = \eta ([0,t]) - \eta ([0,\infty)),\quad t\geq 0.
\end{equation}
In Proposition~\ref{ecf_result_intro} we show that \eqref{MSDDE_intro} admits the following error correction form:
\begin{equation}\label{errorCorrectionForm_intro}
X_t -X_s = \Pi_0\int_s^t X_u\, du + \int_0^\infty \pi (u) (X_{t-u}-X_{s-u})\, du + Z_t-Z_s,\quad s<t.
\end{equation}
To make \eqref{errorCorrectionForm_intro} comparable to \eqref{ecf_var}, one can formally apply the derivative operator $D$ to the equation and obtain
\begin{equation}\label{derivative_ecf}
DX_t = \Pi_0 X_t + \Pi \ast (DX)(t) + DZ_t,\quad t \in \mathbb{R},
\end{equation}
with $\Pi (dt) = \pi (t)\, dt$. We can now formulate the counterpart to Theorem~\ref{discreteVAR}. In the following, $r$ refers to the rank of $\Pi_0$ and in case $r<n$, then $\alpha^\perp,\beta^\perp\in \mathbb{R}^{n\times (n-r)}$ are matrices of rank $n-r$ which satisfy $\Pi_0^T\alpha^\perp = \Pi_0\beta^\perp = 0$.

\begin{theorem}\label{Granger_intro}
	Suppose that $\det h_\eta (z) =0$ implies $\text{Re}(z)<0$ or $z=0$. Moreover, suppose either that the rank $r$ of $\Pi_0$ is $n$, or strictly less than $n$ and $(\alpha^\perp)^T(I_n-\Pi ([0,\infty)))\beta^\perp$ is invertible. Then a process $(X_t)_{t\in \mathbb{R}}$ is a solution to \eqref{MSDDE_intro} if and only if
	\begin{equation}\label{Granger_form_intro}
	X_t = \xi + C_0 Z_t + \int_{-\infty}^t C(t-u)\, dZ_u,\quad t \in \mathbb{R},
	\end{equation}
	where the following holds:
	\begin{enumerate}[(i)]
		\item $\xi$ is a random vector satisfying $\mathbb{E}\lVert \xi \rVert^2<\infty$ and $\Pi_0 \xi =0$.
		\item $C_0 = \begin{cases}
		0 & \text{if }r=n \\
		\beta^\perp \bigl[(\alpha^\perp)^T(I_n - \Pi ([0,\infty)))\beta^\perp \bigr]^{-1}(\alpha^\perp)^T & \text{if } r<n
		\end{cases}$.
		\item $C:[0,\infty) \to \mathbb{R}^{n\times n}$ is characterized by 
		\begin{equation*}
		\int_0^\infty e^{-zt}C(t)\, dt = h_\eta (z)^{-1}-z^{-1}C_0,\quad \text{Re}(z)\geq 0.
		\end{equation*}
	\end{enumerate}
\end{theorem}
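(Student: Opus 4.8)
The plan is to reduce everything to a single analytic fact about the function $h_\eta$ from \eqref{hFunction_intro} near $z=0$ and then to verify the representation against the error correction form \eqref{errorCorrectionForm_intro}, which by Proposition~\ref{ecf_result_intro} is equivalent to \eqref{MSDDE_intro}. The key lemma I would prove first is a Laurent-type expansion: under the hypotheses, $z\mapsto h_\eta(z)^{-1}$ extends meromorphically to a neighborhood of the closed right half-plane, its only singularity there is a \emph{simple} pole at $z=0$, and
\begin{equation*}
h_\eta(z)^{-1} = z^{-1}C_0 + G(z),
\end{equation*}
where $G$ is analytic on a neighborhood of $\{\mathrm{Re}(z)\geq 0\}$ and $C_0$ is exactly the matrix in (ii). The crucial computation here is that $\Pi([0,\infty)) = -\int_{[0,\infty)} t\,\eta(dt)$ (by Fubini, using \eqref{exponentialDecay_intro}), so that $I_n - \Pi([0,\infty)) = h_\eta'(0)$ and the invertibility hypothesis reads ``$(\alpha^\perp)^T h_\eta'(0)\beta^\perp$ invertible''. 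Since $h_\eta(0) = -\Pi_0 = -\alpha\beta^T$ has rank $r$, this is precisely the condition guaranteeing that the pole is simple with coefficient the stated $C_0$; I would establish it by a standard bordered-matrix (Schur complement) argument. The exponential-decay assumption \eqref{exponentialDecay_intro} together with the spectral hypothesis then pushes analyticity of $G$ slightly past the imaginary axis, so its inverse Laplace transform $C$ in (iii) decays exponentially and in particular lies in $L^1\cap L^2$, making $\int_{-\infty}^t C(t-u)\,dZ_u$ well defined.

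For the ``if'' direction I would substitute $X_t = \xi + C_0 Z_t + \int_{-\infty}^t C(t-u)\,dZ_u$ into \eqref{errorCorrectionForm_intro}. The constant $\xi$ drops out because $\Pi_0\xi = 0$ and it cancels in every increment $X_{t-u}-X_{s-u}$; the trend term $C_0 Z_t$ survives only through $Z_t - Z_s$ because $\Pi_0 C_0 = \alpha\beta^T\beta^\perp[\cdots](\alpha^\perp)^T = 0$ (as $\beta^T\beta^\perp = 0$). What remains is a convolution identity in the kernels which, after taking Laplace transforms, is equivalent to $h_\eta(z)(z^{-1}C_0 + G(z)) = I_n$ together with $h_\eta(0)C_0 = -\Pi_0 C_0 = 0$; both hold by the lemma. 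Equivalently, one checks that the stationary part $\int_{-\infty}^t C(t-u)\,dZ_u$ is the unique stationary solution of the MSDDE obtained after peeling off the trend, which is where I would invoke the known stationary-solution result.

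For the ``only if'' direction I would argue by uniqueness. By the ``if'' direction the process $X^*_t := C_0 Z_t + \int_{-\infty}^t C(t-u)\,dZ_u$ is a solution, so for an arbitrary solution $X$ the difference $V_t := X_t - X^*_t$ has stationary increments and solves the homogeneous equation $V_t - V_s = \int_s^t \eta\ast V(u)\,du$. It then suffices to show that every such $V$ is an a.s.\ constant $\xi$ with $\Pi_0\xi = 0$. Formally $DV_t = \eta\ast V(t)$, whose modes are governed by the zeros of $\det h_\eta$; since these lie in $\{\mathrm{Re}(z)<0\}\cup\{0\}$ and $V$ has stationary increments over all of $\mathbb{R}$, only the mode at $z=0$ can occur. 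Testing the linear candidate $V_t = tv + w$ shows $\Pi_0 v = 0$ and $h_\eta'(0)v = \Pi_0 w$; writing $v=\beta^\perp a$ and applying $(\alpha^\perp)^T$ gives $(\alpha^\perp)^T h_\eta'(0)\beta^\perp a = 0$, so the invertibility hypothesis forces $v=0$, after which $\Pi_0 w = 0$. Thus $V_t\equiv\xi$ with $\Pi_0\xi=0$ and $X=\xi+X^*$ has the asserted form; the decomposition is unique because $C_0$ and the kernel $C$ are determined by $h_\eta$ alone.

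The main obstacle is the necessity step: turning the heuristic spectral classification of homogeneous solutions into a rigorous statement, i.e.\ proving that a process with stationary increments solving $V_t - V_s = \int_s^t\eta\ast V\,du$ cannot contain any mode other than the constant one. I would make this precise by Laplace/Fourier-analysing the stationary increment process on the imaginary axis, using the exponential integrability \eqref{exponentialDecay_intro} to justify the transform manipulations and the absence of zeros of $\det h_\eta$ on $\{\mathrm{Re}(z)=0\}\setminus\{0\}$ to conclude that its spectral mass is supported only at the origin; the invertibility condition then isolates the constant solutions exactly as above. The accompanying analytic lemma (simple pole with coefficient $C_0$) is the other delicate point, but it is a purely deterministic matrix computation.
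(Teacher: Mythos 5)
Your overall architecture matches the paper's: the hypotheses are first translated into the statement that $z\mapsto h_\eta(z)^{-1}$ has at most a simple pole at $0$ with residue $C_0$ (this is the paper's Proposition~\ref{propEqOfAssump} together with Remark~\ref{C0_structure}, via the residue formula of \cite{schumacher1991system}, using exactly your identity $I_n-\Pi([0,\infty))=h_\eta'(0)$); the candidate process is then verified by Laplace-transform/convolution manipulations; and necessity is reduced to a uniqueness statement proved by spectral analysis. The sufficiency half and the analytic lemma are sound and essentially the paper's treatment -- the paper works with the kernel $f$ satisfying $\mathcal{L}[f](z)=I_n-zh_\eta(z)^{-1}$ (Lemma~\ref{fExistence}) rather than with $C$ directly, but since $C(t)=\int_t^\infty f(u)\,du$ this difference is cosmetic.

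The genuine gap is in the necessity step, which is precisely where the paper does almost all of its work. Your plan is to Fourier-analyse the homogeneous solution $V$ on the imaginary axis and conclude that its spectral mass sits at the origin; but $V$ only has stationary increments, hence no spectral representation, and the homogeneous error-correction equation is not closed in the increments: $\Delta_h V_t=\Pi_0\int_{t-h}^{t}V_u\,du+\int_0^\infty\pi(u)\Delta_hV_{t-u}\,du$ retains the level term $\Pi_0\int V$. The paper's resolution is to couple the stationary level process $\beta^TV_t$ (stationary because $\Pi_0V_t$ is, by Proposition~\ref{ecf_result_intro}) with the stationary increment process $(\beta^\perp)^T\Delta_hV_t$ into a single convolution system $\mu\ast(U,V)=\tilde Z$, and then to invert the filter $\mathcal{F}[\mu](y)$ at every frequency: at $y=0$ this is exactly where the invertibility of $(\alpha^\perp)^T(I_n-\Pi([0,\infty)))\beta^\perp$ enters (it yields $\det\mathcal{F}[\mu](0)\neq0$), while for $y\neq 0$ one needs not merely $\det h_\eta(iy)\neq0$ but the uniform bound $\sup_{\mathrm{Re}(z)\geq-\varepsilon}\lVert zh_\eta(z)^{-1}\rVert<\infty$ so that the inverse filter is bounded and can be applied through the density of $\{y\mapsto e^{ity}\}$ in $L^2$ of the spectral measure. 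Your deterministic mode analysis ($V_t=tv+w$) is only the endgame once one knows $\Delta_hV$ is an almost sure constant; without the coupled-filter inversion that intermediate fact is not established. You have correctly located the difficulty, but the proposal as written does not supply the idea that closes it.
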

Similarly to the VAR model, Theorem~\ref{Granger_intro} shows that cointegration occurs in the MSDDE model when $\Pi_0$ is of reduced rank $r\in (0,n)$, and the rows of $\Pi_0$ span the cointegration space. It follows as well that we always have uniqueness up to the discrepancy term $\xi$, and the restrictions on $\xi$ depend ultimately on the rank of $\Pi_0$. Since an invertible MCARMA equation may be rephrased as an MSDDE, the notion of cointegrated invertible MCARMA processes can be studied in the MSDDE framework by relying on Theorem~\ref{Granger_intro} (see Section~\ref{CARMAsection} for details).

In Section~\ref{prel} we will introduce some notation which will be used throughout the paper, and which already has been used in the introduction. The purpose of Section~\ref{existenceSection} is to develop a general theory for non-stationary solutions to MSDDEs with stationary increments, some of which will later be used to prove Theorem~\ref{Granger_intro}. In this section we will also put some emphasis on the implications of the representation \eqref{Granger_form_intro}, both in terms of stationary properties and concrete examples. Section~\ref{CARMAsection} discusses how one can rely on the relation between invertible MCARMA equations and MSDDEs to define cointegrated MCARMA processes. In particular, under conditions similar to those imposed in \cite[Theorem~4.6]{fasen2016cointegrated}, we show existence and uniqueness of a cointegrated solution to the MSDDE associated to the MCARMA($p,p-1$) equation. This complements the result of \cite{fasen2016cointegrated}, which ensures existence of cointegrated MCARMA($p,q$) processes when $p>q+1$. Finally, Section~\ref{proofs} contains the proofs of all the statements presented in the paper together with a few technical results.
\section{Preliminaries}\label{prel}
Let $f=(f_{ij}):\mathbb{R}\to \mathbb{C}^{m\times k}$ be a measurable function and $\mu = (\mu_{ij})$ a $k\times n$ matrix where each $\mu_{ij}$ is a measure on $\mathbb{R}$. Then, provided that
\begin{equation*}
\int_\mathbb{R} \vert f_{il}(t)\vert\, \mu_{lj}(dt)<\infty
\end{equation*}
for $l=1,\dots, k$, $i=1,\dots, m$ and $j=1,\dots, n$, we set
\begin{equation}\label{integration}
\int_\mathbb{R} f(t)\, \mu (dt) = \sum_{l=1}^k \begin{bmatrix}
\int_\mathbb{R} f_{1l}(t)\, \mu_{l1}(dt) & \cdots & \int_\mathbb{R} f_{1l}(t)\, \mu_{ln} (dt) \\
\vdots & \ddots & \vdots \\
\int_\mathbb{R} f_{ml}(t)\, \mu_{l1}(dt) & \cdots & \int_\mathbb{R}f_{ml}(t)\, \mu_{ln}(dt)
\end{bmatrix}
\end{equation}
The definition of $\int_\mathbb{R}f(t)\, dt$ is defined in a similar (obvious) manner when either $f$ or $\mu$ is one-dimensional. Moreover, we will say that $\mu$ is a signed measure if it takes the form $\mu = \mu^+ - \mu^-$ for two mutually singular measures $\mu^+$ and $\mu^-$, where at least one of them is finite. The definition of the integral \eqref{integration} extends naturally to signed matrix measures provided that the integrand is integrable with respect to the variation measure $\vert \mu \vert:=\mu^+ + \mu^-$ (simply referred to as being integrable with respect to $\mu$). For a given point $t\in \mathbb{R}$, if $f(t-\cdot)$ is integrable with respect to $\mu$, we define the convolution as
\begin{equation*}
f\ast \mu (t) = \int_\mathbb{R} f(t-u)\, \mu (du).
\end{equation*} 
For a measurable function $f:\mathbb{R}\to \mathbb{C}^{k\times m}$ and $\mu$ an $n\times k$ signed matrix measure, if $f^T(t-\cdot)$ is integrable with respect to $\mu^T$, we set $\mu \ast f (t) := (f^T \ast \mu^T)(t)^T$. Also, if $\mu$ is a given signed matrix measure and $z\in \mathbb{C}$ is such that $\int_\mathbb{R} e^{-\text{Re}(z)t}\, \vert \mu_{ij}\vert (dt)<\infty$ for all $i$ and $j$, the $(i,j)$-th entry of the Laplace transform $\mathcal{L}[\mu](z)$ of $\mu$ at $z$ is defined by
\begin{equation*}
\mathcal{L}[\mu]_{ij}(z) = \int_\mathbb{R} e^{-zt}\, \mu_{ij} (dt).
\end{equation*}
Eventually, if $\vert \mu\vert$ is finite, we will also use the notation $\mathcal{F}[\mu](y) = \mathcal{L}[\mu](iy)$, $y\in \mathbb{R}$, referring to the Fourier transform of $\mu$. When $\mu (dt) = f(t)\, dt$ for some measurable function $f$ we write $\mathcal{L}[f]$ and $\mathcal{F}[f]$ instead.

Finally, a stochastic process $Y_t = (Y_t^1,\dots, Y_t^n)^T$, $t\in \mathbb{R}$, is said to be stationary, respectively have stationary increments, if the finite dimensional marginal distributions of $(Y_{t+h})_{t\in \mathbb{R}}$, respectively $(Y_{t+h}-Y_h)_{t\in \mathbb{R}}$, do not depend on $h\in \mathbb{R}$. 

\section{General results on existence, uniqueness and representations of solutions to MSDDEs}\label{existenceSection}
Suppose that $Z_t = (Z_t^1,\dots, Z_t^n)^T$, $t\in\mathbb{R}$, is an $n$-dimensional measurable process with $Z_0 =0$, stationary increments and $\mathbb{E}\lVert Z_t \rVert^2 <\infty$, and let $\eta = (\eta_{ij})$ be a signed $n\times n$ matrix measure which satisfies \eqref{exponentialDecay_intro} for some $\delta>0$. We will say that a stochastic process $X_t = (X^1_t,\dots, X^n_t)^T$, $t\in \mathbb{R}$, is a solution to the corresponding multivariate stochastic delay differential equation (MSDDE) if it meets the following requirements:
\begin{enumerate}[(i)]
	\item $(X_t)_{t\in \mathbb{R}}$ is measurable and $\mathbb{E}\lVert X_t \rVert^2 < \infty$ for all $t\in \mathbb{R}$.
 	\item $(X_t,Z_t)_{t\in \mathbb{R}}$ has stationary increments.
 	\item\label{solutionEq} The relations
 	\begin{equation*}
 	X^i_t - X^i_s = \sum_{j=1}^n \int_s^t \int_{[0,\infty)} X_{u-v}^j\, \eta_{ij}(dv)\, du + Z_t^i - Z_s^i,\quad i=1,\dots, n,
 	\end{equation*} 
 	hold true almost surely for each $s<t$.
\end{enumerate}
As indicated in the introduction, \eqref{solutionEq} may be compactly written as
\begin{equation}\label{MSDDE}
dX_t = \eta \ast X(t)\, dt + dZ_t,\quad t \in \mathbb{R}.
\end{equation}
We start with the observation that \eqref{MSDDE} can always be written in an error correction form (as noted in \eqref{errorCorrectionForm_intro}):
\begin{proposition}\label{ecf_result_intro}
	Let $\Pi_0\in \mathbb{R}^{n\times n}$ and $\pi:[0,\infty) \to \mathbb{R}^{n\times n}$ be defined by \eqref{PiQuantities}, and suppose that $\delta>0$ is given such that \eqref{exponentialDecay_intro} is satisfied. Then $\sup_{t\geq 0} e^{\varepsilon t}\lVert \pi (t) \rVert <\infty$ for all $\varepsilon < \delta$, and \eqref{MSDDE} can be written as
	\begin{equation}\label{errorCorrectionForm_main}
	X_t -X_s = \Pi_0\int_s^t X_u\, du + \int_0^\infty \pi (u) (X_{t-u}-X_{s-u})\, du + Z_t-Z_s,\quad s<t,
	\end{equation}
	so if $(X_t)_{t\in \mathbb{R}}$ is a solution to \eqref{MSDDE}, then $(\Pi_0 X_t)_{t\in \mathbb{R}}$ is stationary.
\end{proposition}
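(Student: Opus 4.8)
The plan is to dispatch the three claims in order, with the error correction identity as the substantive step.

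\textbf{Decay of $\pi$.} First I would note that, since $[0,t]\subseteq[0,\infty)$, the definition \eqref{PiQuantities} gives $\pi(t)=-\eta((t,\infty))$, so entrywise $\lvert\pi_{ij}(t)\rvert\le\lvert\eta_{ij}\rvert((t,\infty))$. For any $\varepsilon<\delta$ and $t\ge0$, using $e^{\varepsilon t}\le e^{\delta t}\le e^{\delta s}$ for $s>t$,
\[
e^{\varepsilon t}\lvert\eta_{ij}\rvert((t,\infty))\le\int_{(t,\infty)}e^{\delta s}\,\lvert\eta_{ij}\rvert(ds)\le\int_{[0,\infty)}e^{\delta s}\,\lvert\eta_{ij}\rvert(ds)<\infty
\]
by \eqref{exponentialDecay_intro}; taking the maximum over $i,j$ and absorbing constants into the norm gives $\sup_{t\ge0}e^{\varepsilon t}\lVert\pi(t)\rVert<\infty$.

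\textbf{Error correction form.} The heart of the matter is to rewrite the drift $\int_s^t\eta\ast X(u)\,du=\int_s^t\int_{[0,\infty)}\eta(dv)\,X_{u-v}\,du$, and my plan is a double use of Fubini's theorem with one integration by parts in the delay variable. Swapping the order of integration I would obtain $\int_{[0,\infty)}\eta(dv)\,F(v)$ with $F(v):=\int_s^tX_{u-v}\,du=\int_{s-v}^{t-v}X_w\,dw$, so that $F(0)=\int_s^tX_u\,du$ and $F'(w)=-(X_{t-w}-X_{s-w})$. Writing $F(v)=F(0)+\int_0^vF'(w)\,dw$ and using $\eta([0,\infty))=\Pi_0$ yields
\[
\int_{[0,\infty)}\eta(dv)\,F(v)=\Pi_0\int_s^tX_u\,du-\int_{[0,\infty)}\eta(dv)\int_0^v(X_{t-w}-X_{s-w})\,dw.
\]
A second Fubini over the region $\{0\le w\le v\}$ turns the last term into $\int_0^\infty\eta([w,\infty))(X_{t-w}-X_{s-w})\,dw$; since $\eta([w,\infty))=\eta((w,\infty))=-\pi(w)$ for all but the countably many atoms of $\eta$, this set of exceptions is Lebesgue-null and the term equals $-\int_0^\infty\pi(w)(X_{t-w}-X_{s-w})\,dw$. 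Substituting back and adding $Z_t-Z_s$ produces exactly \eqref{errorCorrectionForm_main}. The two swaps are licensed by moment estimates: stationary increments and $\mathbb{E}\lVert X_t\rVert^2<\infty$ force $\mathbb{E}\lVert X_w\rVert$ to grow at most linearly, so $\int_s^t\int_{[0,\infty)}\mathbb{E}\lVert X_{u-v}\rVert\,\lvert\eta\rvert(dv)\,du<\infty$ because $\int_{[0,\infty)}(1+v)\,\lvert\eta\rvert(dv)<\infty$ by \eqref{exponentialDecay_intro}; and $\mathbb{E}\lVert X_{t-w}-X_{s-w}\rVert^2=\mathbb{E}\lVert X_{t-s}-X_0\rVert^2$ is constant in $w$, which with $\lVert\pi(w)\rVert\le Ce^{-\delta w}$ from the first part secures absolute convergence of the final integral.

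\textbf{Stationarity of $\Pi_0X$.} Rearranging \eqref{errorCorrectionForm_main} gives $\int_s^t\Pi_0X_u\,du=(X_t-X_s)-\int_0^\infty\pi(u)(X_{t-u}-X_{s-u})\,du-(Z_t-Z_s)$, whose right-hand side is a fixed measurable linear functional of the increments of $(X,Z)$. Since $(X,Z)$ has stationary increments, the joint law of any finite family of increments is invariant under a common time shift $(s_i,t_i)\mapsto(s_i+h,t_i+h)$, hence so is the family $\{\int_{s_i}^{t_i}\Pi_0X_u\,du\}_i$; equivalently, $t\mapsto\int_0^t\Pi_0X_u\,du$ has stationary increments. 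From $X_t-X_0=\int_0^t\eta\ast X(u)\,du+Z_t$ the process $X$, and therefore $\Pi_0X$, is $L^2$-continuous, because the drift is continuous in $t$ and $\mathbb{E}\lVert Z_t-Z_s\rVert^2=\lvert t-s\rvert\,\mathbb{E}\lVert Z_1\rVert^2$. Consequently $\tfrac1\epsilon\int_{s_i}^{s_i+\epsilon}\Pi_0X_u\,du\to\Pi_0X_{s_i}$ in $L^2$ as $\epsilon\downarrow0$, and passing the shift-invariance of these averages to the limit shows that the finite-dimensional distributions of $(\Pi_0X_t)_{t\in\mathbb{R}}$ are shift-invariant, i.e.\ $(\Pi_0X_t)_{t\in\mathbb{R}}$ is stationary.

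I expect the main obstacle to be the bookkeeping in the error correction step: keeping the matrix-measure multiplication order correct through both Fubini applications, verifying the integrability that licenses them (exactly where the decay bound and the stationary-increment moment estimates enter), and correctly discarding the atoms of $\eta$ when replacing $\eta([w,\infty))$ by $-\pi(w)$. The decay estimate is immediate, and once the integrated process is known to have stationary increments the final claim is a routine $L^2$-differentiation.
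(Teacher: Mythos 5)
Your argument is correct in substance and, in two places, takes a genuinely leaner route than the paper. For the decay of $\pi$, the paper passes through the Laplace transform identity $\mathcal{L}[\pi](z)=z^{-1}\mathcal{L}[\eta-\Pi_0\delta_0](z)$, a Hardy-space argument giving $e^{\tilde{\varepsilon} t}\pi(t)\in L^2$, and then integration by parts plus Cauchy--Schwarz to upgrade to a sup bound; your observation that $\pi(t)=-\eta((t,\infty))$ entrywise, so that $e^{\varepsilon t}\vert\pi_{ij}(t)\vert\le\int_{(t,\infty)}e^{\delta s}\,\vert\eta_{ij}\vert(ds)<\infty$ uniformly in $t$, reaches the same conclusion in two lines and is a genuine simplification. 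For the error correction form, the paper truncates the delay variable at $N$, integrates by parts on $[0,N]$, and shows the boundary term $\pi(N)\int_{s-N}^{t-N}X_u\,du$ vanishes in $L^1(\mathbb{P})$ using the linear growth of $\mathbb{E}\lVert X_u\rVert$ and the decay of $\pi$; your version replaces that limiting boundary-term argument by the fundamental theorem of calculus applied to the absolutely continuous function $F(v)=\int_{s-v}^{t-v}X_w\,dw$ (so $F'(w)=-(X_{t-w}-X_{s-w})$ only for a.e.\ $w$, which suffices) followed by a second Fubini, with the same moment estimates now licensing the interchange rather than killing a boundary term. The computations are equivalent in content, but yours avoids the truncation. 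Do tidy the sign bookkeeping: as literally written, ``the last term'' in your Fubini sentence must be read as the double integral \emph{without} its leading minus sign, otherwise the substitution would produce $-\int_0^\infty\pi(u)(X_{t-u}-X_{s-u})\,du$ instead of the $+$ sign in \eqref{errorCorrectionForm_main}.

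One small repair is needed in the stationarity step. Proposition~\ref{ecf_result_intro} lives in the setting of Section~\ref{existenceSection}, where $(Z_t)_{t\in\mathbb{R}}$ is only a measurable stationary-increment process with finite second moments, not necessarily L\'evy, so the isometry $\mathbb{E}\lVert Z_t-Z_s\rVert^2=\vert t-s\vert\,\mathbb{E}\lVert Z_1\rVert^2$ is not available. This is harmless: continuity of $t\mapsto X_t$ in $L^1(\mathbb{P})$ follows directly from stationarity of increments and finiteness of first moments (the same fact, \cite[Corollary~A.3]{QOU}, that you already invoke for the linear growth of $\mathbb{E}\lVert X_t\rVert$), and $L^1$-convergence of the averages $\epsilon^{-1}\int_{s_i}^{s_i+\epsilon}\Pi_0X_u\,du$ to $\Pi_0X_{s_i}$ is all that is needed to pass shift-invariance to the limit. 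With that substitution your final step coincides with the paper's.
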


\begin{remark}\label{stationarityHeuristics}
Using the notation $\Pi (dt) = \pi (t)\, dt$, we do the following observations in relation to Proposition~\ref{ecf_result_intro}:
\begin{enumerate}[(i)]
	\item If $\Pi_0$ is invertible, a solution $(X_t)_{t\in \mathbb{R}}$ must be stationary itself. 
	\item If $\Pi_0 = 0$ the statement does not provide any further insight. Observe, however, the equation \eqref{errorCorrectionForm_main} depends in this case only on the increments of $(X_t)_{t\in \mathbb{R}}$ so a solution needs not to be stationary in this case.
	\item\label{lastCase} If the rank $r$ of $\Pi_0$ satisfies $0<r<n$, there exist non-trivial linear combinations of the entries of $(X_t)_{t\in \mathbb{R}}$ which are stationary
\end{enumerate}
At this point we have not argued whether or not $(X_t)_{t\in \mathbb{R}}$ can be stationary even when $r<n$ and, ultimately, it depends on the structure of the noise process $(Z_t)_{t\in \mathbb{R}}$. However, it is not too difficult to verify from Theorem~\ref{existence} that if $(Z_t)_{t\in \mathbb{R}}$ is a L\'{e}vy process such that $\mathbb{E}[Z_1Z_1^T]$ is invertible and $A\in \mathbb{R}^{m\times n}$, $(AX_t)_{t\in \mathbb{R}}$ is stationary if and only if $A = B\Pi_0$ for some $B\in \mathbb{R}^{m\times n}$. In case of \eqref{lastCase}, one often considers a rank factorization of $\Pi_0$; that is, one chooses $\alpha,\beta\in \mathbb{R}^{n \times r}$ of rank $r$ such that $\Pi_0 = \alpha \beta^T$. In this way one can identify the columns of $\beta$ as cointegrating vectors spanning the cointegration space, and $\alpha$ as the adjustment matrix determining how deviations from a long run equilibrium affect short run dynamics. This type of intuition is well-known for the cointegrated VAR models, so we refer to \cite{engle1987co} for details.
\end{remark}

In the following we will search for a solution to \eqref{MSDDE}. To this end, let $\delta>0$ be chosen such that \eqref{exponentialDecay_intro} holds, set $\mathbb{H}_\delta := \{z\in \mathbb{C}\, :\, \text{Re}(z)>-\delta\}$ and define $h_\eta: \mathbb{H}_\delta \to \mathbb{C}^{n\times n}$ by 
\begin{equation}\label{hFunction}
h_\eta (z) = zI_n - \mathcal{L}[\eta](z),\quad z \in \mathbb{H}_\delta.
\end{equation}
Since $h_\eta$ is analytic on $\mathbb{H}_\delta$ and $\vert \det h_\eta (z)\vert \to \infty$ as $\vert z\vert \to \infty$, $z\mapsto h_\eta (z)^{-1}$ is meromorphic on $\mathbb{H}_\delta$. Recall that if $z_0$ is a pole of $z\mapsto h_\eta (z)^{-1}$, there exists $n \in \mathbb{N}$ such that $z\mapsto (z-z_0)^n h_\eta (z)^{-1}$ is analytic and non-zero in a neighborhood of $z_0$. If $n=1$ the pole is called simple.

\begin{condition}\label{mostIone} For the function $h_\eta$ in \eqref{hFunction} it holds that
	\begin{enumerate}[(i)]
		\item\label{cond1} $\det (h_\eta (z))\neq 0$ for all $z\in \mathbb{H}_\delta \setminus\{0\}$ and
		\item\label{cond2} $z\mapsto h_\eta (z)^{-1}$ has either no poles at all or a simple pole at $0$.
	\end{enumerate}
	
\end{condition}
For convenience, we have chosen to work with Condition~\ref{mostIone} rather than the assumptions of Theorem~\ref{Granger_intro}. The following result shows that they are essentially the same.
\begin{proposition}\label{propEqOfAssump}
	Suppose that, for some $\varepsilon >0$,
	\begin{equation*}
	\int_{[0,\infty)} e^{\varepsilon t}\, \vert \eta_{ij}\vert (dt)<\infty,\quad i,j = 1,\dots, n.
	\end{equation*}
	The following two statements are equivalent:
	\begin{enumerate}[(i)]
		\item\label{firstProp} There exists $\delta \in (0,\varepsilon]$ such that \eqref{exponentialDecay_intro} and Condition~\ref{mostIone} are satisfied.
		\item\label{secondProp} The assumptions of Theorem~\ref{Granger_intro} hold true.
	\end{enumerate}
\end{proposition}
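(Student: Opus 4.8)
The plan is to split the claimed equivalence into two independent pieces, matching the two halves of Condition~\ref{mostIone} against the two halves of the hypotheses of Theorem~\ref{Granger_intro}. The first part of Condition~\ref{mostIone} concerns the location of the zeros of $\det h_\eta$, which is exactly what the hypothesis ``$\det h_\eta(z)=0$ implies $\text{Re}(z)<0$ or $z=0$'' controls; the second part concerns the behaviour of $h_\eta^{-1}$ at the origin, which is governed by the rank of $\Pi_0$ together with invertibility of $(\alpha^\perp)^T(I_n-\Pi([0,\infty)))\beta^\perp$. Since $\int_{[0,\infty)}e^{\delta t}\,\vert\eta_{ij}\vert(dt)<\infty$ automatically holds for every $\delta\le\varepsilon$ once it holds at $\varepsilon$, the decay requirement \eqref{exponentialDecay_intro} is free for any $\delta\in(0,\varepsilon]$, so the whole content of (i) is the existence of some $\delta\in(0,\varepsilon]$ making Condition~\ref{mostIone} hold. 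Throughout I would work on $\mathbb{H}_\varepsilon$, the half-plane on which the $\varepsilon$-integrability makes $h_\eta$ analytic.

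For the zero-set equivalence, the implication (i)$\Rightarrow$(ii) is immediate: if the first part of Condition~\ref{mostIone} holds for some $\delta\le\varepsilon$, then any zero $z\ne0$ of $\det h_\eta$ lies outside $\mathbb{H}_\delta$, whence $\text{Re}(z)\le-\delta<0$. For the converse I would exploit that $\vert\det h_\eta(z)\vert\to\infty$ as $\vert z\vert\to\infty$ together with analyticity of $\det h_\eta$: on each closed strip $\{-a\le\text{Re}(z)\le0\}$ with $0<a<\varepsilon$ the function $\mathcal{L}[\eta]$ is bounded, so the growth of the $zI_n$ term confines the (isolated) zeros to a bounded subregion, leaving only finitely many. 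Under the hypothesis of Theorem~\ref{Granger_intro} none of these except $z=0$ lies on the imaginary axis, so their real parts attain a strictly negative maximum $\mu$; picking $\delta\in(0,\min(a,-\mu))$ then removes every nonzero zero from $\mathbb{H}_\delta$ and delivers the first part of Condition~\ref{mostIone}.

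For the behaviour at the origin I would first record the expansion of $h_\eta$ at $0$. Writing $h_\eta(z)=zI_n-\mathcal{L}[\eta](z)$ and using $\mathcal{L}[\eta](0)=\eta([0,\infty))=\Pi_0$ gives $h_\eta(z)=-\Pi_0+z\bigl(I_n+\int_{[0,\infty)}t\,\eta(dt)\bigr)+O(z^2)$, and a Fubini computation identifies $\int_{[0,\infty)}t\,\eta(dt)=-\int_0^\infty\eta((t,\infty))\,dt=-\int_0^\infty\pi(t)\,dt=-\Pi([0,\infty))$, so that $h_\eta(z)=-\Pi_0+zM+O(z^2)$ with $M:=I_n-\Pi([0,\infty))$, where $\pi$ and $\Pi_0$ are as in \eqref{PiQuantities}. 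Since $\det h_\eta(0)=\det(-\Pi_0)$, the map $h_\eta^{-1}$ has a pole at $0$ precisely when $r<n$. If $r=n$ there is no pole, so the second part of Condition~\ref{mostIone} holds trivially and matches the case $r=n$ of the Theorem; the remaining and decisive task is to show, for $r<n$, that this pole is simple if and only if $(\alpha^\perp)^TM\beta^\perp$ is invertible.

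This equivalence is the main obstacle, and I would settle it by matching Laurent coefficients, the meromorphy of $h_\eta^{-1}$ recalled before Condition~\ref{mostIone} guaranteeing a finite pole order. Write the principal part as $h_\eta(z)^{-1}=z^{-m}B_m+\dots+z^{-1}B_1+(\text{analytic})$, where $m\ge1$ is the pole order and $B_m\ne0$, and let $D_0$ denote the value at $0$ of the analytic part. Comparing coefficients in $h_\eta(z)h_\eta(z)^{-1}=h_\eta(z)^{-1}h_\eta(z)=I_n$ at order $z^{-m}$ yields $\Pi_0B_m=B_m\Pi_0=0$; writing $\Pi_0=\alpha\beta^T$ and using $\Pi_0\beta^\perp=0$ and $\Pi_0^T\alpha^\perp=0$ this forces $B_m=\beta^\perp\Psi(\alpha^\perp)^T$ for some square matrix $\Psi$. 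If $m\ge2$, the order $z^{-m+1}$ identity $-\Pi_0B_{m-1}+MB_m=0$, multiplied on the left by $(\alpha^\perp)^T$, collapses to $(\alpha^\perp)^TM\beta^\perp\,\Psi(\alpha^\perp)^T=0$, so invertibility of $(\alpha^\perp)^TM\beta^\perp$ forces $\Psi(\alpha^\perp)^T=0$ and hence $\Psi=0$, giving $B_m=0$, a contradiction; thus invertibility implies $m=1$. Conversely, when $m=1$ the order $z^0$ identity $B_1M-D_0\Pi_0=I_n$ (from $h_\eta^{-1}h_\eta=I_n$), multiplied on the right by $\beta^\perp$, gives $B_1M\beta^\perp=\beta^\perp$, and substituting $B_1=\beta^\perp\Psi(\alpha^\perp)^T$ and cancelling the injective $\beta^\perp$ yields $\Psi(\alpha^\perp)^TM\beta^\perp=I_{n-r}$, so the square matrix $(\alpha^\perp)^TM\beta^\perp$ has a right inverse and is therefore invertible. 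Tracking the constants in the same computation recovers $B_1=\beta^\perp[(\alpha^\perp)^TM\beta^\perp]^{-1}(\alpha^\perp)^T$, i.e.\ the residue equals the matrix $C_0$ of Theorem~\ref{Granger_intro}, which is a reassuring consistency check.
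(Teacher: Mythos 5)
Your proof is correct, and for the decisive step it takes a genuinely different route from the paper. The zero-set half (finding $\delta$ so that $\det h_\eta(z)\neq 0$ on $\mathbb{H}_\delta\setminus\{0\}$ via isolation of zeros plus the growth of $\lvert\det h_\eta(z)\rvert$ at infinity) is exactly the paper's argument. For the equivalence between ``$h_\eta^{-1}$ has at most a simple pole at $0$'' and ``$r=n$, or $r<n$ and $(\alpha^\perp)^T(I_n-\Pi([0,\infty)))\beta^\perp$ invertible,'' the paper simply invokes the residue formula of \cite{schumacher1991system} as a black box in both directions, whereas you prove it from scratch: you expand $h_\eta(z)=-\Pi_0+zM+O(z^2)$ with $M=I_n-\Pi([0,\infty))$, show $\Pi_0B_m=B_m\Pi_0=0$ forces the leading Laurent coefficient into the form $\beta^\perp\Psi(\alpha^\perp)^T$, and then read off the order $z^{-m+1}$ and $z^{0}$ identities to get both implications, recovering along the way $B_1=\beta^\perp[(\alpha^\perp)^TM\beta^\perp]^{-1}(\alpha^\perp)^T=C_0$. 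This buys a self-contained proof (and an independent derivation of the residue formula the paper only cites), at the cost of length. Two cosmetic points: in the Fubini identity the two intermediate signs are each off ($\int t\,\eta(dt)=+\int_0^\infty\eta((t,\infty))\,dt$ and $\pi(t)=-\eta((t,\infty))$), but they cancel and the conclusion $\int t\,\eta(dt)=-\Pi([0,\infty))$ is right; and $\Psi\,(\alpha^\perp)^TM\beta^\perp=I_{n-r}$ exhibits a \emph{left} inverse of $(\alpha^\perp)^TM\beta^\perp$, not a right one, which of course still gives invertibility of a square matrix.
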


We will construct a solution $(X_t)_{t\in \mathbb{R}}$ to \eqref{MSDDE} in a similar way as in \cite{basse2018multivariate}, namely by applying a suitable filter (i.e., a finite signed $n\times n$ matrix measure) $\mu$ to $(Z_t)_{t\in \mathbb{R}}$. Theorem~\ref{existence} reveals that the appropriate filter to apply is $\mu (du) = \delta_0(du) - f(u)\, du$ for a suitable function $f:\mathbb{R}\to \mathbb{R}^{n\times n}$. This result may be viewed as a Granger type representation theorem for solutions to MSDDEs and as a general version of Theorem~\ref{Granger_intro}.

\begin{theorem}\label{existence}
	Suppose that Condition~\ref{mostIone} holds. Then there exists a unique function $f:[0,\infty)\to \mathbb{R}^{n\times n}$ satisfying
	\begin{equation}\label{laplaceChar}
	\mathcal{L}[f](z) = I_n - zh_\eta (z)^{-1},\quad z \in \mathbb{H}_\delta,
	\end{equation}
	and the function $u \mapsto f(u)Z_{t-u}$ belongs to $L^1$ almost surely for each $t\in \mathbb{R}$. Moreover, a process $(X_t)_{t\in \mathbb{R}}$ is a solution to \eqref{MSDDE} if and only if
	\begin{equation}\label{solution}
	X_t = \xi + C_0Z_t + \int_0^\infty f(u) [Z_t-Z_{t-u}]\, du,\quad t \in \mathbb{R},
	\end{equation}
	where $\Pi_0 \xi = 0$, $\mathbb{E}\lVert\xi \rVert^2<\infty$ and $C_0 = I_n - \int_0^\infty f(t)\, dt$. 
\end{theorem}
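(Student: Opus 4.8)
The plan is to split the statement into two essentially independent parts: (A) the construction of the kernel $f$ together with its analyticity and integrability properties, and (B) the characterisation of solutions, whose ``if'' and ``only if'' directions I would treat separately. The whole argument is driven by the observation that Condition~\ref{mostIone} makes
\[
g(z) := I_n - z h_\eta(z)^{-1}, \qquad z \in \mathbb{H}_\delta,
\]
analytic on all of $\mathbb{H}_\delta$: away from $0$ this is immediate from Condition~\ref{mostIone}(\ref{cond1}), and at $0$ the factor $z$ exactly cancels the simple pole of $h_\eta^{-1}$ permitted by Condition~\ref{mostIone}(\ref{cond2}). Thus $g$ is a genuine candidate for the Laplace transform of a function supported on $[0,\infty)$.

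For part (A), I would first record the large-$\lvert z\rvert$ behaviour $h_\eta(z) = z I_n - \mathcal{L}[\eta](z) \sim z I_n$, which gives $g(z) = O(\lvert z\rvert^{-1})$ on vertical lines $\mathrm{Re}(z) = c$, uniformly for $c$ in compact subsets of $(-\delta,\infty)$, so that $g(c+i\,\cdot\,)$ lies in $L^2(\mathbb{R})$. A Paley--Wiener/inverse-Laplace argument along such a vertical contour then produces a function $f$ with support in $[0,\infty)$ and $\mathcal{L}[f] = g$, which is \eqref{laplaceChar}; uniqueness of $f$ follows from injectivity of the Laplace transform. Shifting the contour to $\mathrm{Re}(z) = -\varepsilon$ for any $\varepsilon < \delta$, which is allowed since $g$ is analytic on $\mathbb{H}_\delta$, upgrades this to exponential decay of $f$ (the borderline, non-integrable $O(\lvert y\rvert^{-1})$ tail on the imaginary axis is compensated by the analyticity, e.g.\ by isolating the leading term $-z^{-1}\mathcal{L}[\eta](z)$ of $g$ and invoking the exponential bound on $\pi$ from Proposition~\ref{ecf_result_intro}). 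Exponential decay of $f$, together with the at-most-polynomial growth of $u \mapsto \mathbb{E}\lVert Z_{t-u}\rVert^2$, then yields that $u \mapsto f(u) Z_{t-u} \in L^1$ almost surely.

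For the ``if'' direction of (B) I would set $Y_t := C_0 Z_t + \int_0^\infty f(u)[Z_t - Z_{t-u}]\,du$ and verify the three defining requirements of a solution. Finite second moments and stationarity of the increments of $(Y_t,Z_t)$ follow from the filter structure and the stationary increments of $Z$. The integral equation \eqref{MSDDE} is checked by a Fubini computation in which the Laplace identity \eqref{laplaceChar} is used in its equivalent time-domain (renewal) form $z\mathcal{L}[f](z) = \mathcal{L}[\eta](z)(\mathcal{L}[f](z) - I_n)$, and in which $C_0 = \lim_{z\to 0} z h_\eta(z)^{-1} = I_n - \int_0^\infty f(t)\,dt$ makes the boundary terms match; adding any $\xi$ with $\Pi_0\xi = 0$ preserves a solution, since such constants solve the homogeneous equation ($\eta\ast\xi = \Pi_0\xi = 0$).

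The ``only if'' direction is where I expect the real work. Given an arbitrary solution $X$, the difference $W_t := X_t - Y_t$ is a measurable, mean-square, stationary-increment solution of the homogeneous equation $W_t - W_s = \int_s^t \eta\ast W(u)\,du$, and the claim reduces to showing that $W_t$ is almost surely constant in $t$ with $\Pi_0 W_t = 0$. From Proposition~\ref{ecf_result_intro} applied to $W$ one already obtains that $(\Pi_0 W_t)$ is stationary; the hard part is to rule out any further, genuinely non-constant bounded modes. I would do this via the zero structure in Condition~\ref{mostIone}: since $\det h_\eta$ has no zeros in $\mathbb{H}_\delta \setminus \{0\}$ and $h_\eta^{-1}$ has at most a simple pole at $0$, the only finite-variance stationary-increment solutions of the homogeneous equation are constant vectors in $\ker\Pi_0$ --- concretely, transforming the error correction form \eqref{errorCorrectionForm_main} of $W$ and using that the resolvent is analytic on the closed right half-plane apart from the admissible simple pole at $0$ should force the increments $W_t - W_s$ to vanish. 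This spectral uniqueness for the homogeneous equation is, in my view, the main obstacle; once it is in place one has $W_t \equiv \xi$ with $\Pi_0 \xi = 0$ and $\mathbb{E}\lVert\xi\rVert^2 < \infty$, giving exactly \eqref{solution}.
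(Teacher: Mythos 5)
Your part (A) (Paley--Wiener/Hardy-space inversion along vertical lines in $\mathbb{H}_\delta$, using that $I_n-zh_\eta(z)^{-1}=-h_\eta(z)^{-1}\mathcal{L}[\eta](z)=O(\vert z\vert^{-1})$, followed by an upgrade to pointwise exponential decay and then the moment bound on $Z$) and your ``if'' direction (Fubini plus the renewal identity for $f$, with $C_0=I_n-\int_0^\infty f$) are essentially the paper's argument, as is the final step that constants $\xi$ with $\Pi_0\xi=0$ solve the homogeneous equation. The genuine gap is the ``only if'' direction, which you correctly flag as the main obstacle but then do not prove. Two concrete problems. First, the reduction via $W_t=X_t-Y_t$ is already delicate: the solution concept only requires each of $(X_t,Z_t)$ and $(Y_t,Z_t)$ to have stationary increments, which does not imply that $W$ has stationary increments, so Proposition~\ref{ecf_result_intro} cannot be applied to $W$ to conclude that $(\Pi_0W_t)$ is stationary. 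Second, and more importantly, ``transforming the error correction form and using that the resolvent is analytic \ldots should force the increments to vanish'' is not an argument: $W$ (or $Y$) is not stationary, so there is no spectral representation to transform directly, and at the frequency $y=0$ the multiplier $h_\eta(iy)^{-1}$ blows up, so one must say exactly why the permitted simple pole is harmless. This is where the quantitative content of Condition~\ref{mostIone}\eqref{cond2} enters, and your sketch never uses it.

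The paper's proof fills precisely this hole, and it is the substantive half of the theorem. Fix $h>0$ and a rank factorization $\Pi_0=\alpha\beta^T$. Since $(\Pi_0Y_t)$ is stationary and $\alpha$ has full column rank, $U_t=(\beta^T\beta)^{-1}\beta^TY_t$ is stationary, while $V_t=((\beta^\perp)^T\beta^\perp)^{-1}(\beta^\perp)^T\Delta_hY_t$ is stationary as an increment; writing $\Delta_hY_t=\beta\Delta_hU_t+\beta^\perp V_t$ turns \eqref{errorCorrectionForm_main} into a convolution equation $\mu\ast(U^T,V^T)^T=\tilde Z$ for a finite matrix measure $\mu$. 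One then checks that $\mathcal{F}[\mu](y)$ is invertible for every $y$, the point $y=0$ reducing to the invertibility of $(\alpha^\perp)^T[I_n-\Pi([0,\infty))]\beta^\perp$, i.e.\ to the simple-pole condition via Proposition~\ref{propEqOfAssump}. Passing to the spectral processes of $U$, $V$ and $\tilde Z$ and applying the bounded multiplier built from $(iy)h_\eta(iy)^{-1}$ shows that $\Delta_hY$ is a fixed measurable functional of $Z$, hence identical for all solutions; only then does the limit $\lambda^{-1}\int_0^\lambda\Pi_0(Y_u-X_u)\,du\to\Pi_0(Y_0-X_0)=0$ deliver the constraint $\Pi_0\xi=0$. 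Without some version of this two-component spectral inversion (or an equivalent uniqueness mechanism for the homogeneous equation), the characterization in \eqref{solution} is only proved in one direction.
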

Concerning the function $f$ of Theorem~\ref{existence}, it can also be obtained as a solution to a certain multivariate delay differential equation; we refer to Lemma~\ref{fExistence} for more on its properties.  

\begin{remark}\label{C0_structure}
	Let the situation be as described in Theorem~\ref{existence} and note that
	\begin{equation*}
	C_0 = I_n-\mathcal{L}[f](0) = zh_\eta (z)^{-1}\vert_{z=0}.
	\end{equation*}
	 Hence, if the rank $r$ of $\Pi_0$ is equal to $n$ we have that $C_0=0$, and if $r$ is strictly less than $n$, $C_0$ can be computed by the residue formula given in \cite{schumacher1991system}. Specifically, $C_0 = \beta^\perp [(\alpha^\perp)^T(I_n-\Pi ([0,\infty)))\beta^\perp]^{-1}(\alpha^\perp)^T$, where $\alpha^\perp,\beta^\perp\in \mathbb{R}^{n\times (n-r)}$ are matrices of rank $n-r$ satisfying $\Pi_0^T\alpha^\perp = \Pi_0 \beta^\perp = 0$ (note that the inverse matrix in the expression of $C_0$ does indeed exist by Proposition~\ref{propEqOfAssump}).
\end{remark}
In the special case where $z\mapsto h_\eta (z)^{-1}$ has no poles at all, it was shown in \cite[Theorem~3.1]{basse2018multivariate} that there exists a unique stationary solution to \eqref{MSDDE}. The same conclusion can be reached by Theorem~\ref{existence} using that $\Pi_0$ is invertible. Indeed, in this case any solution is stationary, $C_0 = 0$ and $\xi = 0$ (the first two implications follow from Remark~\ref{stationarityHeuristics} and \ref{C0_structure}, respectively). While there exist several solutions when $\Pi_0$ is singular, Theorem~\ref{existence} any two solutions always have the same increments. The term $\xi$ reflects how much solutions may differ and its possible values are determined by the relation $\Pi_0 \xi = 0$.
In view of Proposition~\ref{propEqOfAssump} and Remark~\ref{C0_structure}, Theorem~\ref{Granger_intro} is an obvious consequence of Theorem~\ref{existence} if
\begin{equation}\label{GrangerLack}
\int_0^\infty f(u)[Z_t-Z_{t-u}]\, dt  = \int_{-\infty}^t C(t-u)\, dZ_u,\quad t \in \mathbb{R}.
\end{equation}
Clearly, the right-hand side of \eqref{GrangerLack} requires that we can define integration with respect to $(Z_t)_{t\in \mathbb{R}}$. Although this is indeed possible if $(Z_t)_{t\in \mathbb{R}}$ is a L\'{e}vy process (for instance, in the sense of \cite{rosSpec}), we will here put the less restrictive assumption that $(Z_t)_{t\in \mathbb{R}}$ is a regular integrator as defined in \cite[Proposition~4.1]{basse2018multivariate}:
\begin{corollary}\label{regGranger}
	Suppose that Condition~\ref{mostIone} holds. Assume also that, for each $i = 1,\dots, n$, there exists a linear map $\mathcal{I}_i:L^1 \cap L^2 \to L^1(\mathbb{P})$ which satisfies
	\begin{enumerate}[(i)]
		\item\label{reg1} $\mathcal{I}_i (\mathds{1}_{(s,t]}) = Z^i_t-Z^i_s$ for all $s<t$, and
		\item\label{reg2} for all finite measures $\mu$ on $\mathbb{R}$ with $\int_\mathbb{R}\vert r\vert \, \mu (dr) < \infty$,
		\begin{equation*}
		\mathcal{I}_i \Bigl(\int_\mathbb{R} f_r(t-\cdot)\, \mu (dr)\Bigr) = \int_\mathbb{R} \mathcal{I}_i (f_r(t-\cdot))\, \mu (dr),\quad t\in \mathbb{R},
		\end{equation*}
		where $f_r = \mathds{1}_{[0,\infty)}(\cdot - r)-\mathds{1}_{[0,\infty)}$.
	\end{enumerate}
Then the statement of Theorem~\ref{Granger_intro} holds true with
\begin{equation}\label{integralDefine}
\Bigl(\int_{-\infty}^t C(t-u)\, dZ_u\Bigr)_i = \sum_{j=1}^n \mathcal{I}_j (C_{ij}(t-\cdot)),\quad i=1,\dots, n.
\end{equation}
\end{corollary}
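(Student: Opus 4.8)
The plan is to deduce Theorem~\ref{Granger_intro} from the general representation in Theorem~\ref{existence}. By Proposition~\ref{propEqOfAssump} the hypotheses assumed here are exactly those of Theorem~\ref{Granger_intro}, and by Remark~\ref{C0_structure} the matrix $C_0 = I_n-\int_0^\infty f(t)\,dt$ appearing in \eqref{solution} coincides with the matrix in item (ii) of Theorem~\ref{Granger_intro}; the constraints on $\xi$ are identical as well. Thus Theorem~\ref{existence} already yields the representation \eqref{Granger_form_intro} as soon as the identity \eqref{GrangerLack} is established, with its right-hand side interpreted through the definition \eqref{integralDefine}. The entire task therefore reduces to proving, for each $i=1,\dots,n$ and $t\in\mathbb{R}$,
\begin{equation*}
\sum_{j=1}^n \mathcal{I}_j\bigl(C_{ij}(t-\cdot)\bigr) = \sum_{j=1}^n \int_0^\infty f_{ij}(u)\,[Z^j_t-Z^j_{t-u}]\,du,
\end{equation*}
and it suffices to match the two sums summand by summand.

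First I would make the relation between $C$ and $f$ explicit. From \eqref{laplaceChar} one has $zh_\eta(z)^{-1}=I_n-\mathcal{L}[f](z)$, and since $C_0=I_n-\mathcal{L}[f](0)$, the characterization of $C$ in item (iii) of Theorem~\ref{Granger_intro} rearranges to $\mathcal{L}[C](z)=z^{-1}\bigl(\mathcal{L}[f](0)-\mathcal{L}[f](z)\bigr)$ for $\text{Re}(z)\ge 0$. Writing $\mathcal{L}[f](0)-\mathcal{L}[f](z)=\int_0^\infty(1-e^{-zt})f(t)\,dt$ and using $z^{-1}(1-e^{-zt})=\int_0^t e^{-zs}\,ds$, Fubini's theorem gives $\mathcal{L}[C](z)=\int_0^\infty e^{-zs}\bigl(\int_s^\infty f(t)\,dt\bigr)\,ds$. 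The exponential decay of $f$ (Lemma~\ref{fExistence}), inherited from \eqref{exponentialDecay_intro}, makes all of these integrals absolutely convergent for $\text{Re}(z)\ge 0$, so uniqueness of the Laplace transform yields $C(s)=\int_s^\infty f(t)\,dt$ for $s\ge 0$.

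Next I would recast $C_{ij}(t-\cdot)$ into the shape demanded by property \ref{reg2}. Combining $C_{ij}(s)=\int_s^\infty f_{ij}(v)\,dv$ with the elementary identity $f_v(t-u)=-\mathds{1}_{(t-v,t]}(u)$ for $v>0$, a direct computation gives
\begin{equation*}
C_{ij}(t-u)=\int_{(0,\infty)} f_v(t-u)\,\mu_{ij}(dv),\qquad \mu_{ij}(dv):=-f_{ij}(v)\,dv,
\end{equation*}
where $C_{ij}(t-u)$ is read as $0$ when $t-u<0$. Exponential decay of $f$ makes $\mu_{ij}$ a finite signed measure with $\int_{(0,\infty)}|v|\,|\mu_{ij}|(dv)<\infty$, and it places $C_{ij}(t-\cdot)$ in $L^1\cap L^2$, so $\mathcal{I}_j$ is applicable. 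From property \ref{reg1} and linearity, $f_v(t-\cdot)=-\mathds{1}_{(t-v,t]}$ gives $\mathcal{I}_j(f_v(t-\cdot))=Z^j_{t-v}-Z^j_t$. Passing to the Jordan decomposition $\mu_{ij}=\mu_{ij}^+-\mu_{ij}^-$, applying property \ref{reg2} to each part and recombining by linearity, I would interchange $\mathcal{I}_j$ with the $\mu_{ij}$-integral to obtain
\begin{equation*}
\mathcal{I}_j\bigl(C_{ij}(t-\cdot)\bigr)=\int_{(0,\infty)}\bigl(Z^j_{t-v}-Z^j_t\bigr)\,\mu_{ij}(dv)=\int_0^\infty f_{ij}(v)\,[Z^j_t-Z^j_{t-v}]\,dv.
\end{equation*}
Summing over $j$ establishes \eqref{GrangerLack} and hence the corollary.

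The main obstacle I anticipate is the rigorous justification of this interchange. Property \ref{reg2} is stated only for nonnegative finite measures, whereas $\mu_{ij}$ is signed and carries the full weight of the exponentially decaying density $-f_{ij}$; one must therefore pass through $\mu_{ij}^{\pm}$ and exploit linearity of $\mathcal{I}_j$, while simultaneously checking that each auxiliary integrand $\int_{(0,\infty)} f_v(t-\cdot)\,\mu_{ij}^{\pm}(dv)$ remains in the domain $L^1\cap L^2$ on which $\mathcal{I}_j$ is defined. This last point is precisely where the first-moment hypothesis $\int_\mathbb{R}|r|\,\mu(dr)<\infty$ in \ref{reg2} is consumed: the tail function $u\mapsto \mu_{ij}^{\pm}\bigl((t-u,\infty)\bigr)$ is bounded and integrable exactly because $\int_{(0,\infty)} v\,\mu_{ij}^{\pm}(dv)<\infty$. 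The accompanying measure-theoretic steps — the Fubini identity of the second paragraph and the pointwise representation of $C_{ij}(t-\cdot)$ — are routine given the exponential bounds on $f$.
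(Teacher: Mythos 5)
Your proof is correct and follows essentially the same route as the paper: both reduce the corollary to the identity \eqref{GrangerLack}, identify $C(t)=\int_t^\infty f(u)\,du$ for $t\geq 0$ (and $C(t)=0$ for $t<0$), and use properties \eqref{reg1}--\eqref{reg2} of the regular integrator with the signed measure $f_{ij}(v)\,dv$ to interchange $\mathcal{I}_j$ with the integral. The only cosmetic differences are that the paper \emph{defines} $C$ as the tail integral of $f$ and then verifies that its Laplace transform is $h_\eta(z)^{-1}-z^{-1}C_0$, whereas you derive the tail-integral formula from that Laplace characterization, and that your explicit Jordan decomposition of the signed measure makes precise a step the paper leaves implicit.
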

In Theorem~\ref{Granger_intro} the function $C$ is characterized through its Laplace transform $\mathcal{L}[C]$, but one can also obtain it as a solution to a certain multivariate delay differential equation. This follows by using the similar characterization given for $f$ in Lemma~\ref{fExistence}; the details are discussed in Remark~\ref{Cdyn}. It should also be stressed that the conditions for being a regular integrator (i.e., for $\mathcal{I}_1,\dots, \mathcal{I}_n$ to exist) are mild; many semimartingales with stationary increments (in particular, L\'{e}vy processes) and fractional L\'{e}vy processes, as studied in \cite{Tina}, are regular integrators. For more on regular integrators, see \cite[Section~4.1]{basse2018multivariate}.
\begin{remark}
Suppose that Condition~\ref{mostIone} is satisfied, let $(Z_t)_{t\in \mathbb{R}}$ be a regular integrator, and let $(X_t)_{t\in \mathbb{R}}$ be a solution to \eqref{MSDDE}. Since $\Pi_0 \xi = 0$ and $\Pi_0 C_0 = 0$ (the latter by Remark~\ref{C0_structure}), Corollary~\ref{regGranger} implies that the stationary process $(\Pi_0 X_t)_{t\in \mathbb{R}}$ is unique and given by
\begin{equation}\label{uniqueStatComp}
\Pi_0 X_t = \Pi_0\int_{-\infty}^t C(t-u)\, dZ_u,\quad t \in \mathbb{R}.
\end{equation}
If $(Z_t)_{t\in \mathbb{R}}$ is not a regular integrator one can instead rely on Theorem~\ref{existence} to replace $\int_{-\infty}^tC(t-u)\, dZ_u$ by $\int_0^\infty f(u) [Z_t-Z_{t-u}]\, du$ in \eqref{uniqueStatComp}.
\end{remark}
We end this section by giving two examples. In both examples we suppose for convenience that $(Z_t)_{t\in \mathbb{R}}$ is a regular integrator.

\begin{example}[The univariate case]
	Consider the case where $n=1$ and $\eta$ is a measure which admits an exponential moment in the sense of \eqref{exponentialDecay_intro} and satisfies $h_\eta (z)\neq 0$ for all $z\in \mathbb{H}_\delta\setminus \{0\}$. In this setup Condition~\ref{mostIone} can be satisfied in two ways which ultimately determine the class of solutions characterized in Corollary~\ref{regGranger}:
	\begin{enumerate}[(i)]
		\item If $\Pi_0\neq 0$. In this case, the solution to \eqref{MSDDE} is unique and given by
		\begin{equation*}
		X_t = \int_{-\infty}^t C(t-u)\, dZ_u,\quad t \in \mathbb{R},
		\end{equation*}
		where $\mathcal{L}[C](z) = 1/h_\eta (z)$ for $z\in \mathbb{C}$ with $\text{Re}(z)\geq 0$. This is consistent with the literature on stationary solutions to univariate SDDEs (see \cite{contARMA,GK}).
		
		\item\label{nonStatUniv} If $\Pi_0 = 0$ and $\Pi ([0,\infty)) \neq 1$. In this case, a process $(X_t)_{t\in \mathbb{R}}$ is a solution to \eqref{MSDDE} if and only if
		\begin{equation*}
		X_t = \xi + \bigl(1-\Pi ([0,\infty))\bigr)Z_t + \int_{-\infty}^t C(t-u)\, dZ_u,\quad t \in \mathbb{R},
		\end{equation*}
		where $\xi$ can be any random variable with $\mathbb{E}\xi^2<\infty$ and $\mathcal{L}[C](z) = 1/h_\eta (z) - (1-\Pi ([0,\infty)))/z$ for $z\in \mathbb{C}$ with $\text{Re}(z)\geq 0$.
	\end{enumerate}
	Suppose that we are in case \eqref{nonStatUniv} and fix $h>0$. Using the notation $\Delta_h Y_t := Y_t-Y_{t-h}$, it follows from Proposition~\ref{ecf_result_intro} that $(\Delta_h X_t)_{t\in \mathbb{R}}$ is a stationary solution to the equation
	\begin{equation}\label{levelEq}
	Y_t = \int_0^\infty Y_{t-u}\, \Pi (du) + \Delta_h Z_t,\quad t\in \mathbb{R}.
	\end{equation}
	Existence and uniqueness of stationary solutions to equations of the type \eqref{levelEq} were studied in \cite[Section~3]{contARMA} (when $(\Delta_hZ_t)_{t\in \mathbb{R}}$ is a suitable L\'{e}vy-driven moving average), and it was shown how these sometimes can be used to construct stationary increment solutions to univariate SDDEs.
\end{example}

\begin{example}[Ornstein-Uhlenbeck]
	Suppose that $\eta = A \delta_0$ for some $A\in \mathbb{R}^{n\times n}$, for which its spectrum $\sigma (A)$ satisfies
	\begin{equation}\label{aSpectrum}
	\sigma (A)\setminus \{0\}\subseteq \{z\in \mathbb{C}\, :\, \text{Re}(z)<0\}.
	\end{equation}
	With this specification, the MSDDE \eqref{MSDDE} reads
	\begin{equation}\label{OUeq}
	dX_t = AX_t\, dt + dZ_t,\quad t\in \mathbb{R}.
	\end{equation}
	Under the assumption \eqref{aSpectrum} we have that
	\begin{equation*}
	h_\eta (z)^{-1} = \int_0^\infty e^{(A-I_nz)t}\, dt = \mathcal{L}\bigl[t\mapsto \mathds{1}_{[0,\infty)}(t)e^{At} \bigr] (z),\quad \text{Re}(z)>0.
	\end{equation*}
	Since the set of zeroes of $h_\eta$ coincides with $\sigma (A)$, it follows immediately that Condition~\ref{mostIone} is satisfied for some $\delta>0$ if $0 \notin \sigma (A)$. This is the stationary case where the solution to \eqref{OUeq} takes the well-known form
	\begin{equation*}
	X_t = \int_{-\infty}^t e^{A (t-u)}\, dZ_u,\quad t \in \mathbb{R}.
	\end{equation*}
 	If instead $0 \in \sigma (A)$, let $r<n$ be the rank of $A$ and choose $\alpha^\perp,\beta^\perp \in \mathbb{R}^{n\times (n-r)}$ of rank $n-r$ such that $A^T\alpha^\perp = A \beta^\perp = 0$. We can now rely on Proposition~\ref{propEqOfAssump} and the observation that $\Pi \equiv 0$ to conclude that Condition~\ref{mostIone} is satisfied if $(\alpha^\perp)^T\beta^\perp$ is invertible. This is the cointegrated case where the solution takes the form
 	\begin{equation*}
 	X_t = \xi + C_0Z_t + \int_{-\infty}^t \bigl[e^{A(t-u)}-C_0 \bigr]\, dZ_u,\quad t \in \mathbb{R},
 	\end{equation*}
 	with $\mathbb{E}\lVert \xi \rVert^2<\infty$, $A\xi = 0$ and $C_0 = \beta^\perp \bigl[(\alpha^\perp)^T\beta^\perp \bigr]^{-1} (\alpha^\perp)^T$. In particular, the stationary process $(AX_t)_{t\in \mathbb{R}}$ takes the form
 	\begin{equation*}
 	AX_t = \int_{-\infty}^t Ae^{A(t-u)}\, dZ_u,\quad t \in \mathbb{R}.
 	\end{equation*}
 	 Stationary Ornstein-Uhlenbeck processes have been widely studied in the literature (see, e.g., \cite{barndorff1998some,sato1994recurrence,Sato_OU}). Cointegrated solutions to \eqref{OUeq} have also received some attention, for instance, in \cite{comte1999discrete}.
\end{example}

\section{Cointegrated multivariate CARMA processes}\label{CARMAsection}
In \cite[Theorem~4.8]{basse2018multivariate} it was shown that any stationary MCARMA process satisfying a certain invertibility assumption can be characterized as the unique solution to a suitable MSDDE. This may be viewed as the continuous-time analogue of representing a discrete-time ARMA process as an infinite order AR equation. In this section we will rely on this idea and the results obtained in Section~\ref{existenceSection} to define cointegrated MCARMA processes. The focus will only be on MCARMA($p,p-1$) processes for a given $p\in \mathbb{N}$. However, the analysis should also be doable for MCARMA($p,q$) processes for a general $q\in \mathbb{N}_0$ with $q<p$ by extending the theory developed in the former sections to higher order MSDDEs. This was done in \cite{basse2018multivariate} in the stationary case. For convenience we will also assume that $(Z_t)_{t\in \mathbb{R}}$ is a regular integrator in the sense of Corollary~\ref{regGranger}.

We start by introducing some notation. Define $P,Q:\mathbb{C}\to \mathbb{C}^{n\times n}$ by
	\begin{align*}
	P(z) &= I_n z^p + P_1 z^{p-1} + \cdots + P_p\quad \text{and}\\
	Q(z) &= I_n z^{p-1} + Q_1 z^{p-2} + \cdots + Q_{p-1}
	\end{align*}
for $P_1,\dots, P_p,Q_1,\dots, Q_{p-1}\in \mathbb{R}^{n\times n}$. Essentially, any definition of the MCARMA process $(X_t)_{t\in \mathbb{R}}$ aims at rigorously defining the solution to the formal differential equation
\begin{equation}\label{carmaEq}
P(D)X_t = Q(D)DZ_t,\quad t \in \mathbb{R}.
\end{equation}
Since $P(D)\xi = P_p \xi$ for any random vector $\xi$, one should only expect solutions to be unique up to translations belonging to the null space of $P_p$. To solve \eqref{carmaEq} it is only necessary to impose assumptions on $P$, but since we will be interested in an autoregressive representation of the equation, we will also impose an invertibility assumption on $Q$:
\begin{condition}[Stationary case]\label{statAssump} If $\det P(z) = 0$ or $\det Q(z) = 0$, then $\text{Re}(z)<0$.
\end{condition}

Under Condition~\ref{statAssump} it was noted in \cite[Remark~3.23]{marquardt2007multivariate} that one can find $g:[0,\infty) \to \mathbb{R}^{n\times n}$ which belongs to $L^1 \cap L^2$ with 
\begin{equation}\label{CARMAkernel}
\mathcal{F}[g](y) = P(iy)^{-1}Q(iy),\quad y\in \mathbb{R}.
\end{equation}
Consequently, by heuristically applying the Fourier transform to \eqref{carmaEq} and rearranging terms, one arrives at the conclusion 
\begin{equation}\label{CARMAasMA}
X_t = \int_{-\infty}^t g(t-u)\, dZ_u,\quad t \in\mathbb{R}.
\end{equation}
As should be the case, any definition used in the literature results in this process (although $(Z_t)_{t\in \mathbb{R}}$ is sometimes restricted to being a L\'{e}vy process). In Proposition~\ref{charCARMA} we state two characterizations without proofs; these are consequences of \cite[Definition~3.20]{marquardt2007multivariate} and \cite[Theorem~4.8]{basse2018multivariate}, respectively.

\begin{proposition}\label{charCARMA} Suppose that Condition~\ref{statAssump} is satisfied and let $(X_t)_{t\in \mathbb{R}}$ be defined by \eqref{CARMAkernel}-\eqref{CARMAasMA}.
	\begin{enumerate}[(i)]
		\item\label{stateSpaceRep} Choose $B_1,\dots, B_p\in \mathbb{R}^{n\times n}$ such that $z\mapsto P(z) [B_1z^{p-1}+\cdots + B_p] - Q(z) z^p$ is at most of order $p-1$, and set
		\begin{equation*}
		A = \begin{bmatrix}
		0 & I_n & 0 & \cdots & 0 \\
		0 & 0 & I_n & \cdots & 0 \\
		\vdots & \vdots & \ddots & \ddots & \vdots \\
		0 & 0 & \cdots & 0 & I_n \\
		-P_p & -P_{p-1} & \cdots &-P_2 &-P_1
		\end{bmatrix}\quad \text{and}\quad B = \begin{bmatrix}
		B_1 \\ \vdots \\ B_p
		\end{bmatrix}.
		\end{equation*}
		Then $X_t = C G_t$, where $C = [I_n,0,\dots, 0]^T\in \mathbb{R}^{np\times n}$ and $(G_t)_{t\in \mathbb{R}}$ is the unique stationary process satisfying $dG_t = AG_t\, dt + B\, dZ_t$ for $t \in \mathbb{R}$.
		\item\label{SDDErep} Set $\eta_0=Q_1-P_1$ and let $\eta_1:[0,\infty)\to \mathbb{R}^{n\times n}$ be characterized by 
		\begin{equation}\label{etaOne}
		\mathcal{F}[\eta_1](y) = I_n iy-\eta_0-Q(iy)^{-1}P(iy),\quad y\in \mathbb{R}.
		\end{equation}
		 Then $(X_t)_{t\in \mathbb{R}}$ is the unique stationary process satisfying
		\begin{equation*}
		dX_t = \eta_0 X_t\, dt + \int_0^\infty \eta_1 (u) X_{t-u}\, du\, dt + dZ_t,\quad t \in \mathbb{R}.
		\end{equation*}	 
	\end{enumerate}
\end{proposition}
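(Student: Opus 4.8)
The plan is to treat the two characterizations separately; in both cases the claim reduces to identifying the moving-average kernel of the candidate process with the function $g$ from \eqref{CARMAkernel} by comparing transforms, after which the statement follows from the known existence-and-uniqueness of the stationary solution. Throughout I would use that Condition~\ref{statAssump} places all roots of $\det P$ and $\det Q$ strictly in the left half-plane.

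For part \ref{stateSpaceRep} I would argue by realization theory. The eigenvalues of the companion matrix $A$ are exactly the roots of $\det P$, so Condition~\ref{statAssump} gives $\sigma(A)\subseteq\{\text{Re}(z)<0\}$, and hence the linear equation $dG_t = AG_t\,dt + B\,dZ_t$ admits the unique stationary solution $G_t = \int_{-\infty}^t e^{A(t-u)}B\,dZ_u$. Therefore $CG_t = \int_{-\infty}^t Ce^{A(t-u)}B\,dZ_u$, and comparing with \eqref{CARMAasMA} it suffices to prove $Ce^{At}B = g(t)$ for $t\geq 0$. Passing to Laplace transforms, this is equivalent to the realization identity
\begin{equation*}
C(zI_{np}-A)^{-1}B = P(z)^{-1}Q(z),\quad \text{Re}(z)>0.
\end{equation*}
This is the purely algebraic step: expanding $(zI_{np}-A)^{-1}B$ block-wise and using the companion structure of $A$ together with the defining property that $z\mapsto P(z)[B_1z^{p-1}+\cdots+B_p]-Q(z)z^p$ has degree at most $p-1$, one verifies the identity directly, which is the content of the realization behind \cite[Definition~3.20]{marquardt2007multivariate}.

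For part \ref{SDDErep} I would first check that \eqref{etaOne} genuinely defines an admissible kernel. Set $G(z) := zI_n - \eta_0 - Q(z)^{-1}P(z)$ and expand $Q(z)^{-1}P(z) = zI_n + (P_1-Q_1) + O(z^{-1})$ at infinity; the choice $\eta_0 = Q_1-P_1$ then makes $G$ strictly proper. Since the poles of $G$ are the zeros of $\det Q$, all with negative real part, Laplace inversion shows that $\eta_1$ is a matrix of exponentially decaying exponential-polynomials supported on $[0,\infty)$, so \eqref{exponentialDecay_intro} holds for some $\delta>0$. With $\eta = \eta_0\delta_0 + \eta_1(\cdot)\,d\cdot$, the function \eqref{hFunction} then satisfies on the imaginary axis
\begin{equation*}
h_\eta(iy) = iyI_n - \eta_0 - \mathcal{F}[\eta_1](y) = Q(iy)^{-1}P(iy),
\end{equation*}
whence $h_\eta(iy)^{-1} = P(iy)^{-1}Q(iy) = \mathcal{F}[g](y)$. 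Because $\det h_\eta(z) = \det P(z)/\det Q(z)$ has no zeros in $\mathbb{H}_\delta$ and $h_\eta^{-1}=P^{-1}Q$ has no poles there, Condition~\ref{mostIone} holds with $\Pi_0 = -h_\eta(0) = -Q_{p-1}^{-1}P_p$ invertible. Theorem~\ref{existence} (equivalently \cite[Theorem~4.8]{basse2018multivariate}) then yields a unique stationary solution of the MSDDE, necessarily of the form $\int_{-\infty}^t C(t-u)\,dZ_u$ with $\mathcal{L}[C]=h_\eta^{-1}$; matching Fourier transforms gives $C=g$, so the stationary solution is exactly \eqref{CARMAasMA}.

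The main obstacle I anticipate is the verification in part \ref{SDDErep} that \eqref{etaOne} defines a legitimate kernel with an exponential moment: a priori $\mathcal{F}[\eta_1]$ is prescribed only on the imaginary axis, and one must argue, via the strict properness of $G$ and the stable location of the zeros of $\det Q$, that it is the Fourier transform of a real, exponentially integrable function on $[0,\infty)$. Once $\eta_1$ is known to be admissible, both parts are routine transform computations; in part \ref{stateSpaceRep} the only nontrivial ingredient is the realization identity, which is algebraic and follows from the companion form.
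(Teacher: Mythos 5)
The paper itself gives no proof of this proposition --- it explicitly states the two characterizations ``without proofs'' as consequences of \cite[Definition~3.20]{marquardt2007multivariate} and \cite[Theorem~4.8]{basse2018multivariate}, and your sketch is a correct unpacking of exactly those two citations: the companion-form realization identity $C(zI_{np}-A)^{-1}B=P(z)^{-1}Q(z)$ for part (i), and the verification that $\eta=\eta_0\delta_0+\eta_1(t)\,dt$ is an admissible exponentially-decaying kernel with $h_\eta=Q^{-1}P$ pole-free and invertible on $\mathbb{H}_\delta$ for part (ii). Your emphasis on the one genuinely non-routine point --- that \eqref{etaOne}, prescribed only on the imaginary axis, extends to a real, exponentially integrable kernel on $[0,\infty)$ via strict properness of $zI_n-\eta_0-Q(z)^{-1}P(z)$ and the stable zeros of $\det Q$ --- is exactly the right place to put the work, and the argument goes through.
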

 
It follows from Proposition~\ref{charCARMA} that $(X_t)_{t\in \mathbb{R}}$ can either be defined in terms of a state-space model using the triple $(A,B,C)$ or by an MSDDE of the form \eqref{MSDDE} with 
\begin{equation}\label{etaForCARMA}
\eta (dt) = \eta_0 \delta_0 (dt) + \eta_1(t)\, dt.
\end{equation}
 While $(X_t)_{t\in \mathbb{R}}$ given by \eqref{CARMAasMA} is stationary by definition, it does indeed make sense to search for non-stationary, but cointegrated, processes satisfying \eqref{stateSpaceRep} or \eqref{SDDErep} of Proposition~\ref{charCARMA} also when Condition~\ref{statAssump} does not hold. \citet{fasen2016cointegrated} follow this idea by first characterizing cointegrated solutions to state-space equations and, next, define the cointegrated MCARMA process as a cointegrated solution corresponding to the specific triple $(A,B,C)$. Their definition applies to any MCARMA($p,q$) process and they give sufficient conditions on $P$ and $Q$ for the cointegrated MCARMA process to exist when $q<p-1$. We will use the results from the former sections to define the cointegrated MCARMA($p,p-1$) process as the solution to an MSDDE.

\begin{condition}[Cointegrated case]\label{cointAssump}
	The following statements are true:
	\begin{enumerate}[(i)]
		\item\label{firstCond} If $\det P (z) =0$, then either $\text{Re}(z)<0$ or $z=0$.
		\item The rank $r$ of $P(0) = P_p$ is reduced $r\in (0,n)$.
		\item\label{midCond} The matrix $(\alpha^\perp)^TP_{p-1} \beta^\perp$ is invertible, where $\alpha^\perp,\beta^\perp \in \mathbb{R}^{n\times (n-r)}$ are of rank $n-r$ and satisfy $P_p^T\alpha^\perp = P_p\beta^\perp=0$.
		\item\label{invAssump} If $\det Q(z) =0$ then $\text{Re} (z) <0$.
	\end{enumerate}
\end{condition}
The assumptions \eqref{firstCond}-\eqref{midCond} of Condition~\ref{cointAssump} are also imposed in \cite{fasen2016cointegrated}, and \eqref{invAssump} is imposed to ensure that \eqref{carmaEq} admits an MSDDE representation. In \cite{fasen2016cointegrated} they impose an additional assumption, namely that the polynomials $P$ and $Q$ are so-called left coprime, which is used to ensure that the pole of $z\mapsto P(z)^{-1}$ at $0$ is also a pole of $z\mapsto P(z)^{-1}Q(z)$. However, in our case this is implied by \eqref{invAssump}.

\begin{theorem}\label{carmaCointTheorem}
	Suppose that Condition~\ref{cointAssump} holds. Then the measure in \eqref{etaForCARMA} is well-defined and satisfies \eqref{exponentialDecay_intro} as well as Condition~\ref{mostIone} for a suitable $\delta>0$, and the rank of $\Pi_0 =\eta ([0,\infty))$ is $r$. In particular, a process $(X_t)_{t\in \mathbb{R}}$ is a solution to the corresponding MSDDE if and only if
	\begin{equation}\label{cointegratedCARMA}
	X_t = \xi  + C_0 Z_t + \int_{-\infty}^t C(t-u)\, dZ_u,\quad t \in \mathbb{R},
	\end{equation}
	where $\mathbb{E}\lVert\xi \rVert^2 <\infty $, $P_p \xi =0$, $C_0 = \beta^\perp \bigl[(\alpha^\perp)^T P_{p-1}\beta^\perp \bigr]^{-1}(\alpha^\perp)^T Q_{p-1}$ and $\mathcal{L}[C](z) = P(z)^{-1}Q(z)-z^{-1}C_0$ for $z\in \mathbb{C}$ with $\text{Re}(z)\geq 0$.
\end{theorem}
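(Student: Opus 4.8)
The plan is to reduce the entire statement to Theorem~\ref{Granger_intro}, applied through Corollary~\ref{regGranger} since $(Z_t)_{t\in\mathbb{R}}$ is a regular integrator, and the whole reduction hinges on the single structural identity $h_\eta(z)=Q(z)^{-1}P(z)$. First I would show that $\eta$ in \eqref{etaForCARMA} is well-defined and exponentially integrable. The function $\eta_1$ in \eqref{etaOne} is built only from $Q(iy)^{-1}$, and by Condition~\ref{cointAssump}(iv) the polynomial $Q$ has no root with $\operatorname{Re}(z)\geq 0$; being of finite degree it has a root-free strip $\{\operatorname{Re}(z)>-\delta_Q\}$ for some $\delta_Q>0$. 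Expanding at infinity gives $Q(z)^{-1}P(z)=zI_n-\eta_0+O(1/z)$, where the constant term $-\eta_0=P_1-Q_1$ is exactly what is subtracted in \eqref{etaOne}, so $y\mapsto\mathcal{F}[\eta_1](y)$ is the Fourier transform of a strictly proper rational matrix function analytic on that strip. Hence $\eta_1$ is a finite combination of terms $t^ke^{\lambda t}\mathds{1}_{[0,\infty)}(t)$ with $\operatorname{Re}(\lambda)<-\delta_Q$, which decays exponentially and lies in $L^1\cap L^2$. This is the same computation as in the stationary case of \cite{basse2018multivariate,marquardt2007multivariate}, the only point being that the root of $P$ at $0$ never interferes, because $\eta_1$ is formed from $Q^{-1}$ rather than $P^{-1}$. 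In particular $\int_{[0,\infty)}e^{\varepsilon t}\,\vert\eta_{ij}\vert(dt)<\infty$ for some $\varepsilon>0$, so the hypothesis of Proposition~\ref{propEqOfAssump} is met.

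Next I would establish the identity. Since $\mathcal{L}[\eta](z)=\eta_0+\mathcal{L}[\eta_1](z)$ and, by analytic continuation of \eqref{etaOne} from the imaginary axis, $\mathcal{L}[\eta_1](z)=zI_n-\eta_0-Q(z)^{-1}P(z)$ on the common strip of analyticity, one gets $h_\eta(z)=zI_n-\mathcal{L}[\eta](z)=Q(z)^{-1}P(z)$, and therefore $h_\eta(z)^{-1}=P(z)^{-1}Q(z)$. Evaluating at $z=0$ yields $\Pi_0=-h_\eta(0)=-Q_{p-1}^{-1}P_p$; since $Q_{p-1}=Q(0)$ is invertible this gives $\operatorname{rank}\Pi_0=\operatorname{rank}P_p=r$. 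Moreover $\det h_\eta=\det P/\det Q$, so the zeros of $\det h_\eta$ in $\mathbb{H}_\delta$ (for $\delta<\delta_Q$) are precisely those of $\det P$, which by Condition~\ref{cointAssump}(i) lie at $z=0$ or in $\{\operatorname{Re}(z)<0\}$. This is the first assumption of Theorem~\ref{Granger_intro}.

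The remaining work, and the main obstacle, is the invertibility assumption of Theorem~\ref{Granger_intro}, which is phrased through the $\Pi$-adapted complements. I would take $\beta^\perp_\Pi=\beta^\perp$, which is immediate from $\Pi_0\beta^\perp=-Q_{p-1}^{-1}P_p\beta^\perp=0$, and $\alpha^\perp_\Pi=Q_{p-1}^T\alpha^\perp$, which is a rank-$(n-r)$ basis of $\ker\Pi_0^T$ because $\ker\Pi_0^T=Q_{p-1}^T\ker P_p^T$. Differentiating $\mathcal{L}[\eta_1](z)=zI_n-\eta_0-Q(z)^{-1}P(z)$ and using $\Pi([0,\infty))=\mathcal{L}[\eta_1]'(0)=-\int_0^\infty t\,\eta_1(t)\,dt$, I obtain $I_n-\Pi([0,\infty))=\frac{d}{dz}[Q(z)^{-1}P(z)]\big|_{z=0}$. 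Inserting this, using $P_p\beta^\perp=0$ to kill the term containing $Q'(0)Q_{p-1}^{-1}P_p$, and using $P'(0)=P_{p-1}$, the product collapses to $(\alpha^\perp_\Pi)^T(I_n-\Pi([0,\infty)))\beta^\perp=(\alpha^\perp)^TP_{p-1}\beta^\perp$, which is invertible by Condition~\ref{cointAssump}(iii). With both assumptions of Theorem~\ref{Granger_intro} verified, Proposition~\ref{propEqOfAssump} delivers \eqref{exponentialDecay_intro} and Condition~\ref{mostIone} for a suitable $\delta>0$.

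Finally I would read off the representation from Corollary~\ref{regGranger}. The constraint $\Pi_0\xi=0$ becomes $P_p\xi=0$; the Laplace characterization becomes $\mathcal{L}[C](z)=h_\eta(z)^{-1}-z^{-1}C_0=P(z)^{-1}Q(z)-z^{-1}C_0$; and substituting $\alpha^\perp_\Pi=Q_{p-1}^T\alpha^\perp$, $\beta^\perp_\Pi=\beta^\perp$ together with the identity $(\alpha^\perp_\Pi)^T(I_n-\Pi([0,\infty)))\beta^\perp=(\alpha^\perp)^TP_{p-1}\beta^\perp$ into the formula of Theorem~\ref{Granger_intro}(ii) gives $C_0=\beta^\perp[(\alpha^\perp)^TP_{p-1}\beta^\perp]^{-1}(\alpha^\perp)^TQ_{p-1}$, exactly as claimed. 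I expect the differentiation and change-of-basis bookkeeping in the invertibility step to be the delicate part, with everything else reducing to direct substitution into the already established general theorem.
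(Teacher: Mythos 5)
Your proposal is correct and follows essentially the same route as the paper: establish $h_\eta(z)=Q(z)^{-1}P(z)$, verify the exponential integrability of $\eta_1$ from the strict properness of $zI_n-\eta_0-Q(z)^{-1}P(z)$ and the root-free strip of $\det Q$, pass to the adapted complements $\tilde{\alpha}=Q_{p-1}^T\alpha^\perp$, $\tilde{\beta}=\beta^\perp$ so that the derivative of $Q(z)^{-1}P(z)$ at $0$ collapses (via $P_p\beta^\perp=0$) to $(\alpha^\perp)^TP_{p-1}\beta^\perp$, and then invoke Corollary~\ref{regGranger}. The only cosmetic difference is that you justify the existence and decay of $\eta_1$ by partial fractions of the rational symbol, where the paper uses the $L^2$ Hardy-space bound from the proof of Lemma~\ref{fExistence}; both are valid and lead to the same conclusion.
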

\begin{remark}
Suppose that Condition~\ref{cointAssump} is satisfied and define $\eta$ by \eqref{etaForCARMA}. In this case, Theorem~\ref{carmaCointTheorem} shows that $(X_t)_{t\in \mathbb{R}}$ given by \eqref{cointegratedCARMA} defines a solution to the corresponding MSDDE. As noted right after the formal CARMA equation \eqref{carmaEq}, the initial value $\xi$ should not affect whether $(X_t)_{t\in \mathbb{R}}$ can be thought of as a solution (since $P_p\xi =0$). Hence, suppose that $\xi = 0$. By heuristically computing $\mathcal{F}[X]$ from \eqref{cointegratedCARMA} we obtain
\begin{align*}
\mathcal{F}[X](y) &= (iy)^{-1}C_0 \mathcal{F}[DZ](y) + \mathcal{F}[C](y)\mathcal{F}[DZ](y)= P(iy)^{-1}Q(iy)\mathcal{F}[DZ](y)
\end{align*} 
for $y\in \mathbb{R}$ which, by multiplication of $P(iy)$, shows that $(X_t)_{t\in \mathbb{R}}$ solves \eqref{carmaEq}.
\end{remark}

\section{Proofs}\label{proofs}

\begin{proof}[Proof of Proposition~\ref{ecf_result_intro}]
	We start by arguing that 
	\begin{equation}\label{biStep}
	\sup_{t\geq 0} e^{\varepsilon t}\lVert \pi (t) \rVert<\infty
	\end{equation}
	for a given $\varepsilon \in (0, \delta)$. Note that, for any given finite signed matrix-valued measure $\mu$ on $[0,\infty)$,
	\begin{equation}\label{integratedLaplace}
	\mathcal{L}[\mu](z) = \int_{[0,\infty)} e^{-zu}\, \mu (du) = z \int_0^\infty e^{-zu} \mu ([0,u])\, du
	\end{equation}
	for all $z\in \mathbb{C}$ with $\text{Re}(z)>0$ using integration by parts. Consequently, 
	\begin{equation}\label{trueLaplace}
	\mathcal{L}[\pi](z) = z^{-1}\mathcal{L}[\tilde{\eta}](z),\quad \text{Re}(z)>0,
	\end{equation}
	 using the notation $\tilde{\eta}=\eta-\Pi_0\delta_0$. On the other hand, $z\mapsto \mathcal{L}[\tilde{\eta}](z)$ is analytic on $\mathbb{H}_\delta$ (by \eqref{exponentialDecay_intro}), and since $\mathcal{L}[\tilde{\eta}](0) = \mathcal{L}[\eta](0) - \eta ([0,\infty)) = 0$, $z\mapsto z^{-1}\mathcal{L}[\tilde{\eta}](z)$ is also analytic on $\mathbb{H}_\delta$, and we deduce that
	\begin{equation*}
	C:= \sup_{\text{Re}(z)\geq -\tilde{\varepsilon}} \lVert \mathcal{L}[\tilde{\eta}](z) \rVert + \sup_{\text{Re}(z)\geq -\tilde{\varepsilon}} \bigl\lVert z^{-1} \mathcal{L}[\tilde{\eta}](z) \bigr\rVert<\infty
	\end{equation*}
	for an arbitrary $\tilde{\varepsilon} \in (\varepsilon,\delta)$. Hence, we find that
	\begin{equation*}
	\sup_{\text{Re}(z)>-\tilde{\varepsilon}}\int_\mathbb{R} \bigl\lVert z^{-1}\mathcal{L}[\tilde{\eta}](z) \bigr\rVert^2\, d\text{Im} (z) \leq \Bigr(2+\int_{[-1,1]^c}y^{-2}\, dy \Bigl) C^2<\infty,
	\end{equation*}
	and it follows by \cite[Lemma~4.1]{contARMA} (or a slight modification of \cite[Theorem~1 (Section~3.4)]{dymMK}) and \eqref{trueLaplace} that $t \mapsto e^{\tilde{\varepsilon} t}\lVert \pi (t) \rVert$ belongs to $L^2$. For \eqref{biStep} to be satisfied it suffices to argue that $\sup_{t\geq 0}e^{\varepsilon t}\vert \pi_{ij}(t)\vert < \infty$, where $\pi_{ij}$ refers to an arbitrarily chosen entry of $\pi$. Using integration by parts we find that
	\begin{equation}\label{ibpPi}
	e^{\varepsilon t}\vert \pi_{ij}(t)\vert \leq \vert \pi_{ij}(0)\vert + \int_0^\infty e^{\varepsilon u}\, \vert \tilde{ \eta}_{ij}\vert (du) + \varepsilon \int_0^\infty e^{\varepsilon u}\vert \pi_{ij}(u)\vert \, du.
	\end{equation}
	It is clear that the first term on the right-hand side of \eqref{ibpPi} is finite, and the same holds for the second term by \eqref{exponentialDecay_intro}. For the last term we use the Cauchy-Schwarz inequality and the fact that $(u\mapsto e^{\tilde{\varepsilon} u}\pi_{ij}(u))\in L^2$ to deduce
	\begin{equation*}
	\Bigl(\int_0^\infty e^{\varepsilon u} \vert \pi_{ij} (u)\vert\, du\Bigr)^2 \leq \int_0^\infty e^{-2(\tilde{\varepsilon}-\varepsilon)u}\, du\, \int_0^\infty \bigl(e^{\tilde{\varepsilon}u}\pi_{ij}(u) \bigr)^2\, du<\infty
	\end{equation*}
	and this ultimately allows us to conclude that \eqref{biStep} holds. To show \eqref{errorCorrectionForm_main} it suffices to argue that
	\begin{equation}\label{mainStep}
	\biggl(\int_s^t X^T\ast \tilde{\eta}^T (u)\, du\biggr)^T = \int_0^\infty \pi (u) (X_{t-u}-X_{s-u})\, du
	\end{equation}
	almost surely for each $s<t$. Using that $\tilde{\eta}$ coincides with the Lebesgue-Stieltjes measure of $\pi$, together with integration by parts on the functions $v\mapsto \pi (v)$ and $v\mapsto \int_{s-v}^{t-v}X_u\, du$, we obtain
	\begin{align}\label{convergence}
	\begin{aligned}
	\biggl(\int_s^t X^T\ast \tilde{\eta}^T (u)\, du\biggr)^T &= \lim_{N\to \infty}\biggl(\int_{[0,N]}\biggl(\int_{s-v}^{t-v}X_u\, du\biggr)^T\, \pi^T (dv)\biggr)^T\\
	&= \lim_{N\to \infty} \biggl(\pi (N) \int_{s-N}^{t-N} X_u\, du + \int_0^N \pi (u)(X_{t-u}-X_{s-u})\, du\biggr).
	\end{aligned}
	\end{align}
	By \cite[Corollary~A.3]{QOU}, since $(X_t)_{t\in \mathbb{R}}$ has stationary increments and $\mathbb{E}\lVert X_t \rVert<\infty$, there exist $\alpha,\beta >0$ such that $\mathbb{E}\lVert X_u \rVert\leq \alpha + \beta \vert u \vert$ for all $u\in \mathbb{R}$. Consequently, we may as well find $\alpha^*,\beta^*>0$ (depending on $s$ and $t$) which satisfy 
	\begin{equation*}
	\mathbb{E}\, \biggl\lVert\int_{s-N}^{t-N} X_{u}\, du \biggr\rVert\leq \alpha^* + \beta^* N.
	\end{equation*}
	From this inequality, and due to \eqref{biStep}, each entry of $\pi (N)\int_{s-N}^{t-N} X_u\, du$ converges to $0$ in $L^1(\mathbb{P})$ as $N \to \infty$. The same type of reasoning gives that
	\begin{equation*}
	\mathbb{E}\int_0^\infty \lVert \pi (u) (X_{t-u}-X_{s-u}) \rVert\, du <\infty,
	\end{equation*}
	showing that each entry of $u\mapsto \pi (u)(X_{t-u}-X_{s-u})$ is almost surely integrable with respect to the Lebesgue measure and, hence, \eqref{convergence} implies \eqref{mainStep}. Finally, we need to argue that if $(X_t)_{t\in \mathbb{R}}$ is a solution to \eqref{MSDDE}, $(\Pi_0 X_t)_{t\in \mathbb{R}}$ is stationary. Since $(X_t,Z_t)_{t\in \mathbb{R}}$ has stationary increments it follows immediately from \eqref{errorCorrectionForm_main} that $V_t^\lambda := \lambda^{-1}\int_t^{t+\lambda}\Pi_0 X_u\, du$, $t\in \mathbb{R}$, is a stationary process for any $\lambda >0$. Since $(X_t)_{t\in \mathbb{R}}$ has stationary increments and $\mathbb{E}\lVert X_t \rVert < \infty$, it is continuous in $L^1(\mathbb{P})$ (see \cite[Corollary~A.3]{QOU}), and hence $V_t^\lambda$ converges to $\Pi_0 X_t$ in $L^1(\mathbb{P})$ as $\lambda \downarrow 0$ for any $t\in \mathbb{R}$. Consequently, $(\Pi_0 X_t)_{t\in \mathbb{R}}$ is stationary as well, and this finishes the proof.
	
\end{proof}

\begin{proof}[Proof of Proposition~\ref{propEqOfAssump}]
	Assume that we are in case \eqref{firstProp}. If $z\mapsto h_\eta (z)^{-1}$ has no poles at all, then $\det h_\eta (z) = 0$ implies $\text{Re}(z)<0$ and the rank of $\Pi_0$ is $n$, and thus case \eqref{secondProp} is satisfied as well. If $z\mapsto h_\eta (z)^{-1}$ has a simple pole at $0$, the rank $r$ of $\Pi_0 = -h_\eta (0)$ is strictly less than $n$, and the residue formula in \cite{schumacher1991system} implies that $(\alpha^\perp)^T M\beta^\perp$ is invertible, where
	\begin{equation*}
	M := \frac{h_\eta (z)+\eta ([0,\infty))}{z}\Bigl\vert_{z=0} = I_n - \Pi ([0,\infty))
	\end{equation*}
	is the derivative of $h_\eta$ at $0$, and $\alpha^\perp,\beta^\perp\in \mathbb{R}^{n\times (n-r)}$ are any two matrices of rank $n-r$ satisfying $\Pi_0^T\alpha^\perp = \Pi_0 \beta^\perp=0$. Conversely, if we are in case \eqref{secondProp}, the facts that the zeroes of $z\mapsto \det h_\eta (z)$ are isolated points in $\{z\in \mathbb{C}\, :\, \text{Re}(z)>-\varepsilon\}$ and $\vert \det h_\eta (z)\vert \neq 0$ for $\vert z \vert$ sufficiently large ensure the existence of a $\delta \in (0,\varepsilon]$ such that $\det h_\eta (z) \neq 0$ for all $z\in \mathbb{H}_\delta \setminus \{0\}$. If the rank $r$ of $\Pi_0$ is $n$, $z\mapsto h_\eta (z)^{-1}$ has no poles at all on $\mathbb{H}_\delta$, and if $r<n$ and $(\alpha^\perp)^T M \beta^\perp$ is invertible, the residue formula in \cite{schumacher1991system} implies that $z\mapsto h_\eta (z)^{-1}$ has a simple pole at $0$.
\end{proof}

We will now turn to the construction of a solution to \eqref{MSDDE}. Lemma~\ref{fExistence} concerns the existence of the function $f$ introduced in Theorem~\ref{existence} and its properties.
\begin{lemma}\label{fExistence}
	Suppose that Condition~\ref{mostIone} holds. Then there exists a unique function $f:\mathbb{R}\to \mathbb{R}^{n\times n}$ enjoying the following properties:
	\begin{enumerate}[(i)]
		\item\label{prop2} $\sup_{t\geq 0} e^{\varepsilon t}\lVert f(t) \rVert  <\infty$ for all $\varepsilon < \delta$.
		\item\label{prop3} $\mathcal{L}[f](z) =I_n- zh_\eta (z)^{-1}$ for all $z\in \mathbb{H}_\delta$.
		\item\label{prop1} $f (t) = 0$ for $t<0$ and $f(t) = \int_0^t f \ast \eta (u)\, du-\eta ([0,t])$ for $t \geq 0$.
	\end{enumerate}
\end{lemma}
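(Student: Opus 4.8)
The plan is to construct $f$ as the inverse Laplace transform of $g(z):=I_n-zh_\eta(z)^{-1}$ and then to verify the three stated properties in turn. The starting point is the algebraic identity
\[
g(z)=I_n-zh_\eta(z)^{-1}=-\mathcal{L}[\eta](z)\,h_\eta(z)^{-1},\qquad z\in\mathbb{H}_\delta\setminus\{0\},
\]
which follows from $zI_n=h_\eta(z)+\mathcal{L}[\eta](z)$. Using Condition~\ref{mostIone} I would argue that $g$ extends analytically to all of $\mathbb{H}_\delta$: on $\mathbb{H}_\delta\setminus\{0\}$ the map $h_\eta^{-1}$ is analytic by~\ref{cond1}, while at $0$ the factor $z$ cancels the at most simple pole permitted by~\ref{cond2}, so $zh_\eta(z)^{-1}$, and hence $g$, is analytic at $0$ with $g(0)=I_n-C_0$ in the notation of Remark~\ref{C0_structure}.

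The second step is to realise $g$ as a Laplace transform. For $\text{Re}(z)\geq-\tilde\varepsilon$ with $\tilde\varepsilon\in(0,\delta)$ the bound~\eqref{exponentialDecay_intro} gives $\sup\lVert\mathcal{L}[\eta](z)\rVert\leq M<\infty$, and writing $h_\eta(z)=z(I_n-z^{-1}\mathcal{L}[\eta](z))$ shows $\lVert h_\eta(z)^{-1}\rVert\leq 2/\lvert z\rvert$ once $\lvert z\rvert\geq 2M$; combined with the analyticity, hence boundedness, of $g$ on the compact part $\{\lvert z\rvert\leq 2M,\ \text{Re}(z)\geq-\tilde\varepsilon\}$, this yields
\[
\sup_{\text{Re}(z)>-\tilde\varepsilon}\int_\mathbb{R}\lVert g(z)\rVert^2\,d\text{Im}(z)<\infty.
\]
I would then invoke \cite[Lemma~4.1]{contARMA} (or \cite[Theorem~1 (Section~3.4)]{dymMK}), exactly as in the proof of Proposition~\ref{ecf_result_intro}, to produce a function $f$ vanishing on $(-\infty,0)$ with $t\mapsto e^{\tilde\varepsilon t}f(t)\in L^2$ and $\mathcal{L}[f]=g$ on $\mathbb{H}_\delta$; this is property~\ref{prop3}, and letting $\tilde\varepsilon\uparrow\delta$ gives the $L^2$ version of~\ref{prop2}.

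Next I would pass to the integral equation~\ref{prop1}. Taking Laplace transforms and using~\eqref{integratedLaplace} to identify $\mathcal{L}[t\mapsto\eta([0,t])]=z^{-1}\mathcal{L}[\eta]$ and $\mathcal{L}[\int_0^\cdot f\ast\eta]=z^{-1}\mathcal{L}[f]\mathcal{L}[\eta]$, the asserted identity is equivalent to $z\mathcal{L}[f](z)=(\mathcal{L}[f](z)-I_n)\mathcal{L}[\eta](z)$, which in turn is equivalent to~\ref{prop3}; injectivity of the Laplace transform then delivers~\ref{prop1}. To upgrade the $L^2$ bound to the pointwise bound~\ref{prop2}, I would rewrite~\ref{prop1} in tail form as $f(t)=-\pi(t)-\int_t^\infty f\ast\eta(u)\,du$, using $\int_0^\infty f\ast\eta(u)\,du=\Pi_0$ (which follows from $\mathcal{L}[f](0)=I_n-C_0$ together with $C_0\Pi_0=0$), and then estimate $e^{\varepsilon t}\lVert f(t)\rVert$ for $\varepsilon<\tilde\varepsilon$ by combining the exponential decay of $\pi$ from Proposition~\ref{ecf_result_intro} with a Fubini-plus-Cauchy–Schwarz bound on the tail integral, in direct analogy with the estimate~\eqref{ibpPi}.

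Finally, uniqueness is immediate: any two functions satisfying~\ref{prop2}--\ref{prop3} are supported on $[0,\infty)$, have Laplace transforms defined on $\mathbb{H}_\delta$ by~\ref{prop2}, and share the transform $g$ by~\ref{prop3}, so injectivity of the Laplace transform forces them to coincide almost everywhere. The main technical obstacle is the conversion of the square-integrable decay coming from the Paley–Wiener inversion into the uniform exponential bound~\ref{prop2}: one must exploit the cancellation hidden in~\ref{prop1}, where both $\int_0^t f\ast\eta$ and $\eta([0,t])$ converge to $\Pi_0$, and keep the rate $\varepsilon$ strictly below the rate $\tilde\varepsilon$ used for the $L^2$ estimate.
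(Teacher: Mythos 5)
Your proposal follows essentially the same route as the paper's proof: a uniform $L^2$ bound for $I_n-zh_\eta(z)^{-1}$ on vertical lines in $\mathbb{H}_\delta$ feeding into the Hardy-space/Paley--Wiener inversion of \cite[Lemma~4.1]{contARMA}, the Laplace-transform derivation of the renewal-type equation in \eqref{prop1}, and a bootstrap of the pointwise exponential bound \eqref{prop2} from that equation together with Cauchy--Schwarz against the $L^2$ decay at a slightly faster rate. The only variations --- a Neumann-series bound for $h_\eta(z)^{-1}$ in place of the determinant/cofactor asymptotics, and the tail form $f(t)=-\pi(t)-\int_t^\infty f\ast\eta(u)\,du$ in place of the integration-by-parts estimate \eqref{ibp} --- are cosmetic and both work.
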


\begin{proof}
	First note that, by assumption, $z\mapsto  I_n - zh_\eta (z)^{-1}$ is an analytic function on $\mathbb{H}_\delta$. For any $\varepsilon \in (0, \delta)$ we will argue that
	\begin{equation}\label{keyKernelArgument}
	\sup_{\text{Re}(z)>-\varepsilon}\int_\mathbb{R} \bigl\lVert I_n - z h_\eta (z)^{-1} \bigr\rVert^2\, d\text{Im}(z)<\infty.
	\end{equation}
	If this is the case, a slight extension of the characterization of Hardy spaces (see \cite[Lemma~4.1]{contARMA} or \cite[Theorem~1 (Section~3.4)]{dymMK}) ensures the existence of a function $f:\mathbb{R}\to \mathbb{C}^{n\times n}$, vanishing on $(-\infty,0)$, such that each entry of $t\mapsto e^{\varepsilon t}f(t)$ belongs to $L^2$ and $\mathcal{L}[f](z) = I_n - zh_\eta (z)^{-1}$ for all $z\in \mathbb{C}$ with $\text{Re}(z)>-\varepsilon$. Since $\varepsilon$ was arbitrary and, by uniqueness of the Laplace transform, the relation holds true for all $z \in \mathbb{H}_\delta$. Moreover, since $\overline{\mathcal{F}[f](-y)} = \mathcal{F}[f](y)$ for all $y\in \mathbb{R}$ ($\overline{z}$ denoting the complex conjugate of $z\in \mathbb{C}$), $f$ takes values in $\mathbb{R}^{n\times n}$. To show \eqref{keyKernelArgument} observe initially that
	\begin{equation*}
	C_1 := \sup_{\text{Re}(z)\geq -\varepsilon} \lVert \mathcal{L}[\eta](z) \rVert<\infty,
	\end{equation*}
	since $e^{\varepsilon t}\, \vert\eta_{ij}\vert (dt)$ is a finite measure for all $i,j=1,\dots, n$. The same fact ensures that
	\begin{enumerate}[(i)]
		\item the absolute value of the determinant of $h_\eta (z)$ behaves as $\vert z \vert^n$ as $\vert z \vert \to \infty$, and
		\item the dominating cofactors of $h_\eta (z)$ as $\vert z \vert \to \infty$ are those on the diagonal (the $(i,i)$-th cofactor, $i=1,\dots, n$) and their absolute values behave as $\vert z\vert^{n-1}$ as $\vert z \vert \to \infty$.
	\end{enumerate} 
	In particular, $\lVert h_\eta (z)^{-1} \rVert$ behaves as $\vert z \vert^{-1}$ as $\vert z \vert \to \infty$ and, hence,
	\begin{equation}\label{boundedInverse}
	C_2 := \sup_{\text{Re}(z)\geq -\varepsilon}\lVert zh_\eta (z)^{-1} \rVert<\infty.
	\end{equation}
	Consequently, for any $z\in \mathbb{C}$ with $\text{Re}(z)\geq - \varepsilon$,
	\begin{equation*}
	\int_{[-1,1]} \lVert I_n - z h_\eta (z)^{-1} \rVert^2\, d\text{Im}(z) \leq 2 \bigl(\sqrt{n}+ C_2\bigr)^2
	\end{equation*}
	and
	\begin{align*}
	\int_{[-1,1]^c}\lVert I_n - z h_\eta (z)^{-1} \rVert^2\, d\text{Im}(z)&\leq C_1^2  \int_{[-1,1]^c}\lVert h_\eta (z)^{-1} \rVert^2\, d\text{Im}(z) \\
	&\leq (C_1C_2)^2 \int_{[-1,1]^c} \vert x\vert^{-2}\, dx
	\end{align*}
	using $I_n - zh_\eta (z)^{-1} = -h_\eta (z)^{-1}\mathcal{L}[\eta](z)$ and that $\lVert \cdot \rVert$ is a submultiplicative norm. This verifies \eqref{keyKernelArgument} and, hence, proves the existence of a function $f:\mathbb{R}\to \mathbb{R}^{n\times n}$ with $f(t) = 0$ for $t<0$ and $\mathcal{L}[f](z) = I_n-zh_\eta (z)^{-1}$ for $z\in \mathbb{H}_\delta$ (in particular, verifying \eqref{prop3}). To show \eqref{prop1}, note that
	\begin{equation}\label{functionalRelation}
	\mathcal{L}[f\ast \eta] (z)-\mathcal{L}[\eta](z) = -zh_\eta (z)^{-1}\mathcal{L}[\eta](z) = z\mathcal{L}[f](z)
	\end{equation}
	for $z\in \mathbb{H}_\delta$. By using the observation in \eqref{integratedLaplace} on the measures $f\ast \eta (u)\, du$ and $\eta$ together with \eqref{functionalRelation} we establish that
	\begin{equation}\label{aeverywhere}
	f(t) = \int_0^t f \ast \eta (u)\, du - \eta ([0,t])
	\end{equation}
	for almost all $t\geq 0$. Since we can choose $f$ to satisfy \eqref{aeverywhere} for all $t\geq 0$ without modifying its Laplace transform, we have established \eqref{prop1}. By the càdlàg property of $f$, the uniqueness part follows as well. Finally, we need to argue that \eqref{prop2} holds, and for this it suffices to argue that $\sup_{t\geq 0} e^{\varepsilon t}\vert f_{ij}(t)\vert<\infty$ for all $\varepsilon \in (0,\delta)$ where $f_{ij}$ refers to an arbitrarily chosen entry of $f$. From \eqref{aeverywhere} it follows that the Lebesgue-Stieltjes measure of $f_{ij}$ is given by $\sum_{k=1}^n f_{ik}\ast \eta_{kj} (u)\, du -\eta_{ij} (du)$. Therefore, integration by parts yields
	\begin{align}\label{ibp}
	\begin{aligned}
	e^{\varepsilon t} \vert f_{ij}(t)\vert \leq &\vert f_{ij} (0)\vert + \sum_{k=1}^n \int_0^\infty e^{\varepsilon u}\vert f_{ik}\vert \ast \vert \eta_{kj}\vert (u)\, du +\int_0^\infty e^{\varepsilon u}\, \vert \eta_{ij}\vert (du)  \\
	& + \varepsilon \int_0^\infty e^{\varepsilon u}\vert f_{ij}(u)\vert\, du,
	\end{aligned}
	\end{align}
	so to prove the result we only need to argue that each term on right-hand side of \eqref{ibp} is finite. The assumption \eqref{exponentialDecay_intro} implies immediately that $\int_0^\infty e^{\varepsilon u}\, \vert \eta_{ij}\vert (du)<\infty$. As noted in the beginning of the proof, $u\mapsto e^{\varepsilon' u}f_{ij}(u)$ belongs to $L^2$ for an arbitrary $\varepsilon'\in (0,\delta)$. In particular, for $\varepsilon'\in (\varepsilon,\delta)$,
	\begin{equation*}
	\int_0^\infty e^{\varepsilon u}\vert f_{ij}(u)\vert \, du \leq \Bigl(\int_0^\infty e^{-2(\varepsilon' - \varepsilon)u}\, du\, \int_0^\infty \bigl(e^{\varepsilon' u}f_{ij}(u) \bigr)^2\, du \Bigr)^{1/2}<\infty.
	\end{equation*}
	Finally, since $\int_0^\infty e^{\varepsilon u}\vert f_{ik}\vert \ast \vert \eta_{kj}\vert (u)\, du = \int_{[0,\infty)}e^{\varepsilon u}\, \vert \eta_{kj}\vert (du)\, \int_0^\infty e^{\varepsilon u}\vert f_{ik}(u)\vert\, du$, it follows by the former arguments that this term is finite as well, and this concludes the proof.
\end{proof}
\begin{remark}\label{relationToOrdMSDDE}
	Suppose that $\det h_\eta(z) \neq 0$ for all $z\in \mathbb{H}_\delta$ so that Condition~\ref{mostIone} is satisfied and $z\mapsto h_\eta (z)^{-1}$ has no poles. Under this assumption it was argued in \cite[Proposition~5.1]{basse2018multivariate} that there exists a function $g:\mathbb{R}\to \mathbb{R}^{n\times n}$, which is vanishing on $(-\infty,0)$, is absolutely continuous on $[0,\infty)$ and decays exponentially fast at $\infty$, such that $\mathcal{L}[g](z) = h_\eta (z)^{-1}$ for $z\in \mathbb{H}_\delta$. Since property \eqref{prop3} implies $\mathcal{L}[f](z) = -h_\eta (z)^{-1}\mathcal{L}[\eta](z)$, it must be the case that $f=-g\ast \eta$.
\end{remark}

\begin{proof}[Proof of Theorem~\ref{existence}]
	The existence of $f$ is covered by Lemma~\ref{fExistence}. According to \cite[Corollary~A.3]{QOU} and by equivalence of matrix norms, we may choose $\alpha, \beta,\gamma >0$ such that $\mathbb{E}\lVert Z_t \rVert\leq \alpha + \beta \vert t\vert$ for all $t\in \mathbb{R}$ and $\sum_{i,j=1}^n \vert a_{ij}\vert \leq \gamma \lVert A \rVert$ for all $A = (a_{ij})\in \mathbb{R}^{n\times n}$. Using this together with property Lemma~\ref{fExistence}\eqref{prop2}, we obtain that
	\begin{align*}
	\mathbb{E}\int_0^\infty \lVert f(u)Z_{t-u}\rVert\, du  
	\leq (\alpha + \beta \vert t \vert)\gamma \int_0^\infty \lVert f(u)\rVert\, du + \beta\gamma \int_0^\infty \lVert f(u)\rVert \vert u \vert\, du<\infty.
	\end{align*}
	In particular, this shows that $u\mapsto f(u) Z_{t-u}$ belongs to $L^1$ almost surely and, hence, $(X_t)_{t\in \mathbb{R}}$ given by \eqref{solution} is a well-defined process. We will now split the proof in two parts: first, we argue that $(X_t)_{t\in \mathbb{R}}$ given by \eqref{solution} is indeed a solution to \eqref{MSDDE} (existence) and, next, we show that any other solution necessarily admits this representation (uniqueness).
	
	\textbf{Existence:} Note that $\mathbb{E}\lVert Z_t\rVert^2\leq \gamma_1 + \gamma_2t^2$ for all $t$ and suitable $\gamma_1,\gamma_2>0$ by \cite[Corollary~A.3]{QOU}, so we may use similar reasoning as above to deduce that $\mathbb{E}\lVert X_t\rVert^2 < \infty$ for all $t\in\mathbb{R}$. Moreover, since $(X_t)_{t\in \mathbb{R}}$ solves \eqref{MSDDE} if and only if it solves \eqref{errorCorrectionForm_main}, we may and do assume $\xi = 0$ so that
	\begin{equation*}
	X_t = Z_t- \int_0^\infty f(u)Z_{t-u}\, du,\quad t \in \mathbb{R}.
	\end{equation*}
	To show that $(X_t)_{t\in \mathbb{R}}$ satisfies \eqref{MSDDE}, we need to argue that
	\begin{equation}\label{solutionRelation}
	X_t - X_s - (Z_t - Z_s) = \int_s^t\eta \ast X (u)\, du
	\end{equation}
	for $s<t$.  To this end, note that
	\begin{equation}\label{arg1}
	X_t - X_s - (Z_t - Z_s) = \int_\mathbb{R} \eta ((s-u,t-u])Z_u\, du - \int_\mathbb{R} \int_{s-u}^{t-u} f \ast \eta (v)\, dv\, Z_u\, du
	\end{equation}
	and
	\begin{align}
	\begin{aligned}\label{arg2}
	\int_s^t \eta \ast X (u)\, du &= \int_\mathbb{R}\eta ((s-u,t-u])X_u\, du \\
	&= \int_\mathbb{R}\eta ((s-u,t-u]) Z_u\, du -\int_\mathbb{R}\eta ((s-u,t-u])\int_\mathbb{R} f(v)Z_{u-v}\, dv\, du
	\end{aligned}
	\end{align}
	using Lemma~\ref{fExistence}\eqref{prop1} and \eqref{solution}, respectively. Moreover, by comparing their Laplace transforms, one can verify that $\eta \ast f := (f^T\ast \eta^T)^T = f\ast \eta$ and, thus,
	\begin{align}\label{arg3}
	\begin{aligned}
	\int_\mathbb{R} \int_{s-u}^{t-u} f \ast \eta (v)\, dv\, Z_u\, du &= \int_\mathbb{R}\int_\mathbb{R}\eta ((s-u-v,t-u-v]) f(v)\, dv\, Z_u \, du\\
	&= \int_\mathbb{R}\eta((s-u,t-u])\int_\mathbb{R} f(v)Z_{u-v}\, dv\, du
	\end{aligned}
	\end{align}
	It follows by combining \eqref{arg1}-\eqref{arg3} that \eqref{solutionRelation} is satisfied. Recall that, for $(X_t)_{t\in \mathbb{R}}$ to be a solution, we need to argue that $(X_t,Z_t)_{t\in \mathbb{R}}$ has stationary increments. However, since
	\begin{equation*}
	X_{t+h}-X_h = (Z_{t+h}-Z_h) - \int_0^\infty f(u)[(Z_{t-u+h}-Z_h)- (Z_{-u+h}-Z_h)]\, du ,\quad t \in \mathbb{R},
	\end{equation*}
	and the distribution of $(Z_{t+h}-Z_h)_{t\in \mathbb{R}}$ does not depend on $h$, it follows that the distribution of $(X_{t+h}-X_h,Z_{t+h}-Z_h)_{t\in \mathbb{R}}$ does not depend on $h$. A rigorous argument can be carried out by approximating the above Lebesgue integral by Riemann sums in $L^1(\mathbb{P})$; since this procedure is similar to the one used in the proof of \cite[Theorem~3.1]{basse2018multivariate}, we omit the details here.
	
	\textbf{Uniqueness:} Suppose that $(Y_t)_{t\in \mathbb{R}}$ satisfies \eqref{MSDDE}, $\mathbb{E}\lVert Y_t \rVert^2<\infty$ for all $t\in \mathbb{R}$, and $(Y_t,Z_t)_{t\in \mathbb{R}}$ has stationary increments. In addition, suppose for the moment that we have already shown that
	\begin{equation}\label{sameIncrements}
	Y_t-Y_s = X_t-X_s,\quad s,t\in \mathbb{R}.
	\end{equation}
	Then it follows from \eqref{errorCorrectionForm_main} that $V_\lambda := \lambda^{-1}\int_0^\lambda \Pi_0 (Y_u-X_u)\, du = 0$ almost surely for all $\lambda >0$. On the other hand, since $(X_t)_{t\in \mathbb{R}}$ and $(Y_t)_{t\in \mathbb{R}}$ have stationary increments, they are continuous in $L^1(\mathbb{P})$ and, hence, $V_\lambda \to \Pi_0 (Y_0-X_0)$ in $L^1(\mathbb{P})$ as $\lambda \downarrow 0$. This shows that $Y_0 - X_0$ belongs to the null space of $\Pi_0$ almost surely and, consequently, $(Y_t)_{t\in \mathbb{R}}$ is necessarily of the form \eqref{solution}. The remaining part of the proof concerns showing \eqref{sameIncrements} or, equivalently, the process $\Delta_h Y_t :=Y_t-Y_{t-h}$, $t\in \mathbb{R}$, is unique for any $h>0$. We will rely on the same type of ideas as in the proof of \cite[Proposition~7]{comte1999discrete} and \cite[Proposition~4.5]{fasen2016cointegrated}. Suppose first that $\Pi_0$ has reduced rank $r\in (0,n)$ and let $\alpha, \beta \in \mathbb{R}^{n\times r}$ be a rank decomposition of $\Pi_0$ as in Remark~\ref{stationarityHeuristics}. Moreover, let $\alpha^\perp,\beta^\perp\in \mathbb{R}^{n\times (n-r)}$ be matrices of rank $n-r$ such that $\alpha^T \alpha^\perp = \beta^T\beta^\perp=0$. Then it follows from Theorem~\ref{ecf_result_intro} that
	\begin{align}
	\label{uniquenessProof}
	\begin{aligned}
	\alpha^T \Delta_h Y_t &= \alpha^T \alpha \beta^T\int_0^h Y_{t-u}\, du+ \alpha^T \int_0^\infty \pi (u) \Delta_h Y_{t-u}\, du + \alpha^T \Delta_h Z_t,\\
	(\alpha^\perp)^T\Delta_h Y_t &= (\alpha^\perp)^T \int_0^\infty \pi (u) \Delta_h Y_{t-u}\, du + (\alpha^\perp)^T\Delta_h Z_t
	\end{aligned}
	\end{align}
	for each $t\in \mathbb{R}$.	Define the stationary processes $U_t = (\beta^T \beta)^{-1} \beta^T Y_t$ and $V_t = \bigl((\beta^\perp)^T \beta^\perp\bigr)^{-1} (\beta^\perp)^T \Delta_h Y_t$ (cf. Theorem~\ref{ecf_result_intro}). By using that $\Delta_h Y_t = \beta \Delta_h U_t + \beta^\perp V_t$ and rearranging terms, \eqref{uniquenessProof} can be written as
	\begin{equation}\label{comprimisedSystem}
	\mu \ast \begin{bmatrix} U_t \\ V_t
	\end{bmatrix}=\tilde{Z}_t,\quad t \in \mathbb{R},
	\end{equation}
	where
	\begin{equation*}
	\mu = \begin{bmatrix}
	\alpha^T \bigl[(\delta_0 - \delta_h)I_n - \alpha \beta^T\mathds{1}_{(0,h]}(u)\, du - \Delta_h\pi (u)\, du\bigr]\beta & \alpha^T[\delta_0 I_n - \pi (u)\, du]\beta^\perp \\
	(\alpha^\perp)^T [(\delta_0 - \delta_h)I_n - \Delta_h \pi (u)\, du]\beta & (\alpha^\perp)^T[\delta_0 I_n - \pi (u)\, du]\beta^\perp
	\end{bmatrix}
	\end{equation*}
	and $\tilde{Z}_t = [\Delta_h Z_t^T\alpha,\,  \Delta_h Z_t^T \alpha^\perp]^T$. Now, note that the Fourier transform $\mathcal{F}[\mu]$ of $\mu$ takes the form
	\begin{align*}
	\MoveEqLeft\mathcal{F}[\mu](y) \\
	&=\begin{bmatrix} \alpha^T\bigl[(1-e^{-ihy})[I_n - \mathcal{F}[\pi](y)] - \alpha\beta^T \mathcal{F}[\mathds{1}_{(0,h]}](y)\bigr]\beta & \alpha^T [I_n - \mathcal{F}[\pi](y)]\beta^\perp \\
	(\alpha^\perp)^T (1-e^{-ihy})[I_n -\mathcal{F}[\pi](y)]\beta & (\alpha^\perp)^T [I_n-\mathcal{F}[\pi](y)]\beta^\perp \end{bmatrix}.
	\end{align*}
	In particular, it follows that
	\begin{align*}
	\det \mathcal{F}[\mu](0) &= \det \begin{bmatrix} -\alpha^T \alpha \beta^T\beta h & \alpha^T [I_n-\mathcal{F}[\pi](0)]\beta^\perp \\
	0 & (\alpha^\perp)^T [I_n-\mathcal{F}[\pi](0)]\beta^\perp \end{bmatrix}\\
	&= (-h)^r \det (\alpha^T\alpha) \det (\beta^T\beta) \det \bigl((\alpha^\perp)^T[I_n-\Pi ([0,\infty))]\beta^\perp\bigr),
	\end{align*}
	which is non-zero by Proposition~\ref{propEqOfAssump}. Consequently, it follows from \eqref{comprimisedSystem} that the means of $(U_t)_{t\in \mathbb{R}}$ and $(V_t)_{t\in \mathbb{R}}$ are uniquely determined by the one of $(\tilde{Z}_t)_{t\in \mathbb{R}}$, $[\mathbb{E}U_0^T,\mathbb{E}V_0^T]^T = \mu ([0,\infty))^{-1}\mathbb{E}\tilde{Z}_0$. For this reason we may without loss of generality assume that $(U_t)_{t\in \mathbb{R}}$, $(V_t)_{t\in \mathbb{R}}$ and $(\tilde{Z}_t)_{t\in \mathbb{R}}$ are all zero mean processes so that they admit spectral representations. Recall that the spectral representation of a stationary, square integrable and zero mean process $(S_t)_{t\in \mathbb{R}}$ is given by $S_t = \int_\mathbb{R}e^{ity}\, \Lambda_S (dy)$, $t\in \mathbb{R}$, where $(\Lambda_S (t))_{t\in \mathbb{R}}$ is a complex-valued spectral process which is square integrable and continuous in $L^2(\mathbb{P})$, and which has orthogonal increments. (Integration with respect to $\Lambda_S$ can be defined as in \cite[pp 388-390]{grimmett2001probability} for all functions in $L^2(F_S)$, $F_S$ being the spectral distribution of $(S_t)_{t\in \mathbb{R}}$.) Consequently, by letting $\Lambda_U$, $\Lambda_V$ and $\Lambda_{\tilde{Z}}$ be the spectral processes corresponding to $(U_t)_{t\in \mathbb{R}}$, $(V_t)_{t\in \mathbb{R}}$ and $(\tilde{Z}_t)_{t\in \mathbb{R}}$, equation \eqref{comprimisedSystem} can be rephrased as
	\begin{equation}\label{integralRelation}
	\int_\mathbb{R}e^{ity} \mathcal{F}[\mu](y)\, \begin{bmatrix} \Lambda_U \\ \Lambda_V \end{bmatrix}(dy) = \int_\mathbb{R}e^{ity}\, \Lambda_{\tilde{Z}}(dy),\quad t \in \mathbb{R}.
	\end{equation}
	Here we have used a stochastic Fubini result for spectral processes, e.g., \cite[Proposition~A.1]{davis2018stochastic}. Since the functions $y \mapsto e^{ity}$, $t\in \mathbb{R}$, are dense in $L^2(F)$ for any finite measure $F$ (cf. \cite[p. 150]{Yaglom:cor}), the relation \eqref{integralRelation} remains true when $y\mapsto e^{ity}$ is replaced by any measurable and, say, bounded function $g:\mathbb{R}\to \mathbb{C}^{n\times n}$. In particular, we will choose
	\begin{equation*}
	g(y) = e^{ity}(iy)h_\eta (iy)^{-1}\begin{bmatrix} \alpha^T \\ (\alpha^\perp)^T \end{bmatrix}^{-1}
	\end{equation*}
	for $y \neq 0$ and $g(0) = [0_{n\times r},\, \beta^\perp]\mathcal{F}[\mu](0)^{-1}$. Note that by \eqref{boundedInverse}, $g$ is indeed bounded. After observing that
	\begin{align*}
	\mathcal{F}[\mu](y) &= \begin{bmatrix} \alpha^T \\ (\alpha^\perp)^T \end{bmatrix} \begin{bmatrix} (1-e^{-ihy})\bigl[ I_n-\mathcal{F}[\pi](y)+ (iy)^{-1}\alpha\beta^T\bigr] & I_n - \mathcal{F}[\pi](y) \\
	(1-e^{-ihy})[I_n -\mathcal{F}[\pi](y)] & I_n-\mathcal{F}[\pi](y) \end{bmatrix} \begin{bmatrix} \beta & \beta^\perp \end{bmatrix}\\
	&= (iy)^{-1}\begin{bmatrix} \alpha^T \\ (\alpha^\perp)^T \end{bmatrix}  h_\eta (iy)  \begin{bmatrix} (1-e^{-ihy})\beta & \beta^\perp \end{bmatrix}
	\end{align*}
	for $y\neq 0$, it is easy to verify that $g(y)\mathcal{F}[\mu](y) = [\beta (e^{ity}-e^{i(t-h)y}),\, \beta^\perp e^{ity}]$ for all $y\in \mathbb{R}$. Consequently, it follows from \eqref{integralRelation} that
	\begin{equation*}
	\Delta_h Y_t = \int_\mathbb{R}\begin{bmatrix}\beta (e^{ity}-e^{i(t-h)y}) & \beta^\perp e^{ity} \end{bmatrix}\, \begin{bmatrix} \Lambda_U \\ \Lambda_V \end{bmatrix} (dy) = \int_\mathbb{R} g(y)\, \Lambda_{\tilde{Z}} (dy),
	\end{equation*}
	showing that the process $(\Delta_h Y_t)_{t\in \mathbb{R}}$ is uniquely determined by $(\tilde{Z}_t)_{t\in \mathbb{R}}$. Now we only need to ague that this type of uniqueness also holds when $\Pi_0$ is invertible and $\Pi_0 = 0$. If $\Pi_0$ is invertible, $(Y_t)_{t\in \mathbb{R}}$ must in fact be stationary (cf. Remark~\ref{stationarityHeuristics}), and by \cite[Theorem~3.1]{basse2018multivariate} there is only one process enjoying this property. If $\Pi_0 = 0$, the case is simpler than if $r\in (0,n)$, since here we only need to consider the second equation of \eqref{uniquenessProof} with $\alpha^\perp = I_n$ and the spectral representation of $(\Delta_h Y_t)_{t\in \mathbb{R}}$. To avoid too many repetitions we leave out the details.
\end{proof}

\begin{proof}[Proof of Corollary~\ref{regGranger}]
	As noted right before the statement we only need to argue that \eqref{GrangerLack} is satisfied with respect to the definition \eqref{integralDefine}.
	In order to do so, note that
	\begin{align*}
	\Bigr(\int_0^\infty f(u)[Z_t-Z_{t-u}]\, du \Bigl)_i
	&= \sum_{j=1}^n \int_0^\infty \mathcal{I}_j (\mathds{1}_{(t-u,t]}) f_{ij}(u)\, du  \\
	&= \sum_{j=1}^n \mathcal{I}_j \Bigl(\mathds{1}_{[0,\infty)}(t-\cdot)\int_{t-\cdot}^\infty f_{ij}(u)\, du\Bigr) \\ 
	&= \Bigl(\int_{-\infty}^t C(t-u)\, dZ_u\Bigr)_i,
	\end{align*}
	where $C(t) =0$ for $t<0$ and $C(t)=\int_t^\infty f(u)\, du$ for $t\geq 0$.
	Now observe that, for $z \in \mathbb{C}$ with $\text{Re}(z)<0$,
	 \begin{equation}\label{laplaceC}
	 \mathcal{L}[C](z) = z^{-1} \Bigl(\int_0^\infty f(t)\, dt - \mathcal{L}[f](z) \Bigr) = h_\eta (z)^{-1}-z^{-1}C_0
	 \end{equation}
	 using Remark~\ref{C0_structure} and Lemma~\ref{fExistence}\eqref{prop3}. Since both sides of \eqref{laplaceC} are analytic functions on $\mathbb{H}_\delta$, the equality holds true on $\mathbb{H}_\delta$. This proves that $C$ can be characterized as in the statement of Theorem~\ref{Granger_intro} and, thus, finishes the proof. 
	  
\end{proof}
\begin{remark}\label{Cdyn}
	As was the case for the function $f$ of Lemma~\ref{fExistence}, $C$ can also be obtained as a solution to a multivariate delay differential equation. Specifically, the shifted function $\tilde{C}(t) = C_0 + C(t)$, $t\geq 0$, satisfies
	\begin{equation}\label{Cdyn_concrete}
	\tilde{C}(t) - \tilde{C}(s) = \int_s^t \tilde{C}\ast \eta (u)\, du,\quad 0\leq s<t.
	\end{equation}
	By Theorem~\ref{existence} the initial condition is $\tilde{C}(0) = I_n$. To see that \eqref{Cdyn_concrete} holds note that, for fixed $0\leq s <t$, Lemma~\ref{fExistence}\eqref{prop1} implies
	\begin{equation*}
	\tilde{C}(t) - \tilde{C}(s) = - \int_s^t f(u)\, du = \int_s^t \Bigl(\eta ([0,u])-\int_0^u f \ast \eta (v)\, dv\Bigr)\, du,
	\end{equation*}
	and
	\begin{equation*}
	\int_0^u f\ast \eta (v)\, dv = \int_{[0,\infty)} \int_0^{u-r} f(v)\, dv \, \eta (dr) = \eta ([0,u])- \tilde{C} \ast \eta (u)
	\end{equation*}
	by Fubini's theorem. In the same way as in the proof of Theorem~\ref{ecf_result_intro}, one can rely on integration by parts to write \eqref{Cdyn_concrete} in error correction form:
	\begin{equation*}
	\tilde{C}(t) - \tilde{C}(s) = \int_s^t \tilde{C}(u)\Pi_0\, du + \int_0^\infty [\tilde{C}(t-u)-\tilde{C}(s-u)]\, \Pi (du),\quad 0\leq s <t.
	\end{equation*}
\end{remark}

\begin{proof}[Proof of Theorem~\ref{Granger_intro}]
	In view of Proposition~\ref{propEqOfAssump} we may assume that Condition~\ref{mostIone} is satisfied. Consequently, by using \cite[Example~4.2]{basse2018multivariate}, which states that an $n$-dimensional L\'{e}vy process with finite first moments is a regular integrator (that is, there exist $\mathcal{I}_1,\dots, \mathcal{I}_n$ satisfying Corollary~\ref{regGranger}\eqref{reg1}-\eqref{reg2}), the result is an immediate consequence of Corollary~\ref{regGranger}.
\end{proof}

\begin{proof}[Proof of Theorem~\ref{carmaCointTheorem}]
	Note that, by Condition~\ref{cointAssump}\eqref{invAssump}, we can choose $\varepsilon>0$ such that $\det Q(z) \neq0$ whenever $\text{Re}(z)\geq -\varepsilon$. To show that \eqref{etaForCARMA} is well-defined and satisfies \eqref{exponentialDecay_intro} for some $\delta>0$ it suffices to establish
	\begin{equation}\label{CARMAkeyCondition}
	\sup_{\text{Re}(z)>-\varepsilon}\int_\mathbb{R}\lVert I_n z-\eta_0-Q(z)^{-1}P(z)\rVert^2\, d\text{Im}(z) < \infty.
	\end{equation}
	(See, e.g., the beginning of the proof of Lemma~\ref{fExistence}.) It is straightforward to verify that $\eta_0 = Q_1-P_1$ is chosen such that $z\mapsto Q(z)(I_nz-\eta_0) - P(z)$ is a polynomial of at most order $p-2$. Consequently, the integrand in \eqref{CARMAkeyCondition} is of the form $\lVert Q(z)^{-1}R(z) \rVert^2$, where $Q$ is of strictly larger degree than $R$, and hence it follows by sub-multiplicativity of $\lVert \cdot \rVert$ that it decays at least as fast as $\vert z\vert^{-2}$ when $\vert z \vert \to \infty$. Since the integrand is also bounded on compact subsets of $\{z\in\mathbb{C}\, :\, \text{Re}(z)\leq \varepsilon \}$ we conclude that \eqref{CARMAkeyCondition} is satisfied. 
	
	Next, we will show that the assumptions of Theorem~\ref{Granger_intro} are satisfied (which, by Proposition~\ref{propEqOfAssump}, is equivalent to showing that Condition~\ref{mostIone} holds). Observe that $h_\eta (z) = Q(z)^{-1}P(z)$ when $\text{Re}(z)>-\varepsilon$, so by \eqref{firstCond} and \eqref{invAssump} in Condition~\ref{cointAssump} it follows that $\det h_\eta (z)= 0$ implies $\text{Re}(z)<0$ or $z=0$. Now, a Taylor expansion of $z \mapsto Q(z)^{-1}$ around $0$ yields 
	\begin{equation*}
	\mathcal{L}[\eta](z) = \eta ([0,\infty)) + \bigl(I_n+Q_{p-1}^{-1}Q_{p-2}Q_{p-1}^{-1}P_p-Q^{-1}_{p-1}P_{p-1} \bigr)z + O(z^2),\quad \vert z \vert \to 0,
	\end{equation*}
	and hence
	\begin{equation*}
	\Pi ([0,\infty)) = \frac{\mathcal{L}[\eta](z)-\eta ([0,\infty))}{z}\Bigr\vert_{z=0} = I_n-Q^{-1}_{p-1}\bigl[P_{p-1}-Q_{p-2}Q_{p-1}^{-1}P_p\bigr].
	\end{equation*}
	Let $\tilde{\alpha}=Q_{p-1}^T\alpha^\perp$ and $\tilde{\beta} = \beta^\perp$, and note that these matrices are of rank $n-r$ and satisfy $\Pi_0^T\tilde{\alpha} = \Pi_0 \tilde{\beta}=0$. Thanks to Condition~\ref{cointAssump}\eqref{midCond}, the matrix
	\begin{equation*}
	\tilde{\alpha}^T (I_n-\Pi ([0,\infty)))\tilde{\beta} = (\alpha^\perp)^TP_{p-1}\beta^\perp,
	\end{equation*}
	is invertible, so the assumptions of Theorem~\ref{Granger_intro} are satisfied.
	The remaining statements are now simply consequences of Corollary~\ref{regGranger}.
\end{proof}

\subsection*{Acknowledgments}
I would like to thank Andreas Basse-O'Connor and Jan Pedersen for helpful comments. This work was supported by the Danish Council for Independent Research (Grant DFF - 4002--00003).

\end{document}